\documentclass[11pt]{amsart}

\usepackage[left=1in, right=1in, top=1.2in, bottom=1.2in]{geometry}
\usepackage{microtype, mathtools, mathrsfs, enumerate, dsfont, esvect,centernot}
\setlength{\parskip}{.2em}

\usepackage{verbatim}
\usepackage[linktocpage]{hyperref}
\hypersetup{
    colorlinks=true,        
    linkcolor=red,          
    citecolor=blue,         
    filecolor=magenta,      
    urlcolor=cyan           
}

\usepackage{amssymb}
\usepackage{url}

\theoremstyle{plain}
	\newtheorem{theorem}{Theorem}
		\numberwithin{theorem}{section}
	
	\newtheorem{proposition}[theorem]{Proposition}

\theoremstyle{definition}
	\newtheorem{definition}[theorem]{Definition}
	\newtheorem{remark}[theorem]{Remark}
\hyphenation{arch-i-med-e-an}








\newif\ifhascomments \hascommentstrue
\ifhascomments
  \newcommand{\dragos}[1]{{\color{red}[[\ensuremath{\bigstar\bigstar\bigstar} #1]]}}
  \newcommand{\matt}[1]{{\color{red}[[\ensuremath{\spadesuit\spadesuit\spadesuit} #1]]}}
\else
  \newcommand{\dragos}[1]{}
  \newcommand{\matt}[1]{}
\fi

\begin{document}

\title[On the importance of being primitive]{On the importance of being primitive}

\author[Jason P. Bell]{Jason P. Bell}
\address{Department of Pure Mathematics, \\University of Waterloo, \\ Waterloo, ON \\ Canada \\ N2L 3G1}
\email{jpbell@uwaterloo.ca}


\keywords{primitive ideals, Dixmier-Moeglin equivalence, prime spectrum}
\subjclass{16D60, 16A20, 16A32}
\maketitle
\begin{abstract}
We give a brief survey of primitivity in ring theory and in particular look at characterizations of primitive ideals in the prime spectrum for various classes of rings.%

\end{abstract}




\section{Introduction}
Given a ring $R$, an ideal $P$ of $R$ is called \emph{left primitive} if there is a simple left $R$-module $M$ such that $P$ is the ideal of elements $r\in R$ that annihilate $M$; i.e., $$P=\{r\in M\colon rm=0~\forall m\in M\}.$$  The notion of a right primitive ideal can be defined analogously.  Bergman \cite{Ber}, while still an undergraduate, gave an example of a ring in which $(0)$ is a right primitive but not a left primitive ideal, so in general the two notions do not coincide; although, in practice, the collections of left and right primitive ideals are often the same.  For the remainder of this paper, we will speak only of primitive ideals, with the understanding that we are always working on the left, and, in any case, for most of the rings considered in this survey, a left/right symmetric characterization of primitivity is given. 

We note that if $P$ is a primitive ideal that is the annihilator of a simple left $R$-module $M$ then $M$ natural inherits a structure as a simple left $R/P$-module that is now faithful; i.e., it has zero annihilator as an $R/P$-module.  A ring for which the $(0)$ ideal is primitive is then called a primitive ring and so we see that if $P$ is primitive then $R/P$ is a primitive ring.  We note that the intersection of the primitive ideals of a ring is equal to the Jacobson radical---this is true whether one works with left or right primitive ideals \cite[Prop. 3.16]{GW}.
\subsection{Why are primitive ideals important?}
Primitive ideals have played a significant role in the development of ring theory throughout its history.  In this brief subsection, we give an overview of the reasons why primitive ideals are important.

A fundamental tool in understanding finite groups is via their representation theory.  Given a simple module $M$, by Schur's theorem $\Delta:={\rm End}_R(M)$ is a division ring and there is a map $f:R\to {\rm End}_{\Delta}(M)$.  We call such a map an \emph{irreducible representation} of $R$.  The image of $R$ is isomorphic to a subring of a ring of linear operators of a vector space over a division ring and hence one can view the image of the ring $R$ as now being a ring of linear transformations.  In general, some information is lost under this procedure and this image gives an imperfect picture of the ring $R$.  One can think of this process intuitively by imagining a sculpture and then shining a light against it and seeing the shadow that is cast. One can gain only an imperfect idea of the original form of the sculpture from its shadow, but it would not be unreasonable for one to cling to the vague hope that if one were to shine a light against the sculpture in all possible manners then one could hope to recover a good idea of its shape from the totality of the data encoded by the shadows.
  
Similarly, with irreducible representations one does not generally have all information about the ring, but one can hope that if one has a sufficiently good understanding of a large number of irreducible representations of a ring then one can answer difficult structure-theoretic questions about the ring itself.

In practice, it is very difficult to find all irreducible representations of a ring.  This can be done for group algebras of many finite groups over the complex numbers, but in general the problem is intractable.  Nevertheless, just knowing the annihilators of the simple modules will often allow one to prove non-trivial facts about a ring.  

As mentioned above, if $R$ is a primitive ring with a faithful simple $R$-module $M$ then 
by Schur's theorem $\Delta={\rm End}_R(M)$ is a division ring and the Jacobson density theorem gives that 
there is an embedding $f:R\to {\rm End}_{\Delta}(M)$ given by $f(r)(m)=r\cdot m$.  Jacobson's theorem says in fact that this embedding is dense in the following sense: if $v_1,\ldots ,v_m$ are $\Delta$-linearly independent elements of $M$ and $w_1,\ldots ,w_m\in M$ then there is some $r\in R$ such that $f(r)(v_i)=w_i$.  If one studies much of Jacobson's work on the theory of rings, one sees a unifying philosophy to his approach to many ring theoretic problems.
\vskip 2mm
{\bf Jacobson's method}
\begin{enumerate}
\item Given a problem about a ring $R$ show, if possible, that it suffices to consider $R/J(R)$, where $J(R)$ is the Jacobson radical of $R$.
\item If the first step can be completed, use the fact that $J(R)$ is an intersection of primitive ideals to obtain that $R$ is a subdirect product of primitive rings $R/P$.
\item Next, if possible, use the second step to show that one can reduce to the case when $R$ is primitive.
\item Finally, use the the embedding of a primitive ring into a ring of linear operators described above to reduce the problem to one that is essentially linear algebra.
\end{enumerate}
Although Jacobson's method is not a panacea for dealing with all problems in ring theory, it nevertheless provides an effective means of answering many types of questions that arise within the discipline.  Perhaps the best example of this method being used in practice is to obtain Jacobson's famous commutativity theorem, which says that if $R$ is a ring with the property that for every $x\in R$ there is some positive integer $n(x)>1$ such that $x^{n(x)}=x$ then $R$ is commutative.  It is not difficult to show that in such a ring the Jacobson radical must be zero and since this property is inherited by homomorphic images, it suffices to prove the primitive case.  Then one can use the Jacobson density theorem and the fact that such a ring can have no nonzero nilpotents to reduce to the case when $R$ is a division ring.  The division ring case is not immediate, but it is still not so hard to show that a division ring with this property must be a field (see Jacobson \cite{J1}).

A second reason that the primitive ideals are important can be seen via analogy with commutative algebra.  Given a commutative ring $R$, one can form an affine scheme $X={\rm Spec}(R)$, consisting of the prime ideals of $R$ endowed with the Zariski topology.  We note that if $R$ is commutative then a simple module is of the form $R/P$ with $P$ maximal and the annihilator of $R/P$ is $P$.  Thus the primitive ideals of $R$ are just the closed points of $X$.  In general primitive ideals need not be maximal, but they are always prime ideals: to see this, observe that if $P$ is a primitive ideal of $R$ and $M$ is a faithful simple left $S:=R/P$-module then if $P$ is not prime then there exist nonzero $a,b\in S$ such that $aSb=(0)$; but now since $b\neq 0$ and $M$ is faithful and simple we have $SbM=M$ and so $(0)= aSbM = aM$, which is nonzero since $a$ is nonzero, a contradiction.  Thus the primitive ideals form a distinguished subset of the prime ideals. In the commutative setting, the power of the affine scheme $X$ associated to $R$ is that one can think of $R$ as being a ring of functions on $X$.  In the noncommutative setting, this point of view is necessarily less powerful, but it is nevertheless still the case that the prime spectrum and the primitive spectrum, for many classes of rings, encode a lot of valuable information about the ring.  The focus of the remainder of this survey is to study the problem of determining which prime ideals in the prime spectrum of a ring are primitive.
\section{Important background results}
There are several necessary and sufficient conditions for prime ideals to be primitive.  We give a brief overview of these results.
\subsection{Locally closed prime ideals}
We first outline the relationship between a prime ideal being locally closed in the prime spectrum and being primitive.
\begin{definition} Let $P$ be a prime ideal of a ring $R$. We say that $P$ is \emph{locally closed} in ${\rm Spec}(R)$ if the intersection of the prime ideals that properly contain $P$ is strictly larger than $P$.  
\end{definition}
We will often just say that $P$ is locally closed, with the understanding that we are talking about being locally closed in ${\rm Spec}(R)$.

We recall that a subset $Y$ of a topological space $X$ is locally closed if $Y$ is the intersection of an open set of $X$ with a closed subset of $X$.  We observe that our definition of $P$ being locally closed coincides with the topological definition for $\{P\}$ being a closed point of the prime spectrum. 
To see this, if $I$ denotes the intersection of the prime ideals that properly contain $P$ and $I\neq P$ then $\{P\} = C_P \setminus C_I$, where given a two-sided ideal $J$ of $R$ we let $C_J$ denote the closed subset of ${\rm Spec}(R)$ consisting of the prime ideals that contain $J$.  Thus $\{P\}$ is locally closed if it is equal to the intersection of the closed set $C_P$ and the open set $C_I^{c}$.  On the other hand, if $\{P\}=C\cap U$ for some closed set $C$ and some open set $U$ then $C=C_J$ for some $J$ and $U=C_I^c$ for some ideal $I$ so $P$ is the unique prime that contains $J$ and does not contain $I$ and so we see that the intersection of the prime ideals of $R$ that properly contain $P$ is an ideal that properly contains $P$.

In the case that $R$ is noetherian, if $P$ is locally closed and one lets $I$ denote the intersection of the prime ideals that contain $P$ then $I$ is a finite intersection of prime ideals by Noether's theorem.  Hence in this setting being locally closed is equivalent to saying that the poset of prime ideals that properly contain $P$ has a finite set of minimal elements.

The connection between being locally closed and being primitive is as follows.
\begin{proposition} Let $R$ be a ring and let $P$ be a locally closed prime ideal of $R$.  Then $P$ is primitive if and only if the Jacobson radical of $R/P$ is zero.\label{lc}
\end{proposition}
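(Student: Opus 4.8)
The plan is to first reduce to the case $P = (0)$ by passing to the quotient ring $R/P$. Primitivity of $P$ as an ideal of $R$ is by definition equivalent to $(0)$ being primitive in $R/P$, and the prime ideals of $R/P$ correspond, order-isomorphically, to the prime ideals of $R$ that contain $P$; under this correspondence the primes properly containing $P$ match the nonzero primes of $R/P$, and since $P$ is contained in all of them we have $(\bigcap_{\alpha} Q_{\alpha})/P = \bigcap_{\alpha} (Q_{\alpha}/P)$. Hence ``$P$ locally closed in $\Spec(R)$'' translates exactly to ``$(0)$ locally closed in $\Spec(R/P)$'', and the Jacobson radical hypothesis is already phrased in terms of $R/P$. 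So I may assume $R$ is prime, that the intersection $I$ of all nonzero prime ideals of $R$ is itself nonzero, and I must show that $(0)$ is primitive if and only if $J(R) = 0$.

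One direction is immediate: if $(0)$ is a primitive ideal, then since the Jacobson radical is the intersection of all primitive ideals (as recalled in the introduction) we get $J(R) \subseteq (0)$, hence $J(R) = 0$.

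For the converse, suppose $J(R) = 0$ but, seeking a contradiction, that $(0)$ is not primitive. Every primitive ideal of $R$ is prime, so every primitive ideal of $R$ is in fact a nonzero prime ideal, and therefore contains $I$ by the very definition of $I$. Taking the intersection over all primitive ideals then gives $I \subseteq J(R) = 0$, contradicting the local closedness of $(0)$, which asserts $I \neq (0)$. Hence $(0)$ must be primitive, completing the argument.

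The proof is short, so there is no deep obstacle; the only points requiring care are the reduction to $P = (0)$ — verifying that both ``locally closed'' and ``primitive'' behave correctly under the correspondence of primes lying over $P$ — and the clean use of the two facts already established, namely that primitive ideals are prime and that $J(R)$ equals the intersection of the primitive ideals of $R$.
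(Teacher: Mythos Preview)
Your proof is correct, and in fact cleaner than the paper's for the harder direction. Both arguments reduce to $P=(0)$ and handle ``primitive $\Rightarrow J(R)=0$'' identically via the description of $J(R)$ as the intersection of primitive ideals. For the converse, however, the paper proceeds constructively: it uses the quasi-regularity characterization of $J(R)$ to find $x\in I$ with $1+x$ not a left unit, enlarges $R(1+x)$ to a maximal left ideal $J$, and then verifies directly that the simple module $R/J$ is faithful. Your argument bypasses this construction entirely by observing that if $(0)$ is not primitive then every primitive ideal is a nonzero prime, hence contains $I$, forcing $I\subseteq J(R)=0$ and contradicting local closedness. Your route is shorter and uses nothing beyond ``primitive $\Rightarrow$ prime'' and the definition of $J(R)$; the paper's route has the mild advantage of exhibiting an explicit faithful simple module rather than merely asserting its existence.
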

\begin{proof} By replacing $R$ by $R/P$, we may assume that $P=(0)$.  Let $I\neq (0)$ denote the intersection of the nonzero prime ideals of $R$.  If $J(R)=(0)$ then since $I$ is not contained in the Jacobson radical, there exists some $x\in I$ such that $1+x$ is not a (left) unit.  Let $L=R(1+x)$.  Then $L$ is a proper left ideal of $R$ and hence by Zorn's lemma there is some maximal left ideal $J$ of $R$ that contains $L$.  Let $M=R/J$.  Then $M$ is simple and notice that if $M$ is not faithful then its annihilator must be some nonzero prime ideal $Q$ that contains $I$.  But this means $x\in Q$ and so $xM=(0)$.  But by construction $(1+x)\cdot (1+J) = 0$ since $1+x\in L\subseteq J$ and so $x$ and $1+x$ both annihilate $1+J\in M$.  But this means $1=(1+x)-x$ annihilates $1+J$ and so $1+J=0+J$, a contradiction.  Thus $M$ is faithful and so $R$ is primitive.  Conversely if the Jacobson radical of $R$ is nonzero then since the Jacobson radical is the intersection of the primitive ideals, we see that $(0)$ is not primitive and so $R$ is not primitive.
\end{proof}

There is also a general connection between a ring being primitive and the centre of the ring being small, although there are examples of primitive rings with big centres (see for example, Irving \cite{Irv01}).  A general principle, however, is that being primitive is in some sense orthogonal to being commutative. This can be seen to some degree from the fact that we showed earlier: a commutative ring is primitive if and only if it is a field.  Kaplansky \cite[Theorem 23.31]{Rowenbook} generalized this and showed that a primitive ring satisfying a polynomial ring is isomorphic to a matrix ring over a division ring with the division ring being finite-dimensional over its centre.  

\subsection{Noncommutative localization and the Nullstellensatz}
An immensely useful construction in commutative algebra is that of forming the field of fractions of an integral domain.  In general, there are many subtleties that arise when one tries to mimic this construction in the noncommutative setting, the most obvious obstruction being the fact that if one multiplies two ``left fractions,'' $s_1^{-1}r_1$ and $s_2^{-1}r_2$, together, then there is no obvious way to rewrite this product as a left fraction $s^{-1}r$.  Indeed, one cannot always do this and rings with sets of left denominators for which one can do this are said to satisfy the Ore condition.  If $R$ is a semiprime noetherian ring then a remarkable result of Goldie \cite[Theorem 6.15]{GW} shows that if we let $S$ denote the set of regular elements of $R$; that is, the elements of $R$ that are neither left nor right zero divisors of $R$, then we can localize at $S$ and form a quotient ring $S^{-1}R$ that is a semisimple Artinian ring, which we denote by $Q(R)$.  This can then be regarded as being a ``noncommutative field of fractions'' of $R$.  It is of course not a field unless $R$ is a commutative domain, but it is the next best thing: a ring that is isomorphic to a finite product of matrix rings over division rings.  In the case that $R$ is prime noetherian, the quotient ring is in fact a simple Artinian ring.

If $R$ is a prime noetherian $k$-algebra then $Q(R)$ is again a $k$-algebra and its centre, $Z(Q(R))$ is a field extension of $k$.  We call this field the \emph{extended centre} of $R$. If $R$ is a primitive noetherian ring then $Z(Q(R))$ embeds in the endomorphism ring of a faithful simple module $M$ \cite[Lemma II.7.13]{BrGo}, and so if the endomorphism ring is small then the centre of the Goldie quotient ring is necessarily small, too.  In the case that $R$ is a noetherian $k$-algebra, with $k$ a field, and $P$ is a prime ideal with the property that $Z(Q(R))$ is an algebraic extension of $k$ then we say that $P$ is \emph{rational}.

Notice that in the case that $R$ is commutative, ${\rm End}_R(M)$ is just the field $R/{\rm Ann}(M)$.  For many commutative algebras, one shows that the surjective homomorphisms onto fields are constrained via the Nullstellensatz.  More precisely, there is a strong Nullstellensatz for commutative Jacobson rings.  We recall that a ring $R$ is \emph{Jacobson} if every prime ideal is the intersection of the primitive ideals above it; equivalently, being Jacobson is the same as saying that the Jacobson radical of $R/P$ is zero for all prime ideals $P$ of $R$. Notice in light of Proposition \ref{lc}, we have that if $R$ is a Jacobson ring then a locally closed prime ideal is primitive.  For commutative rings we have the following version of the Nullstellensatz \cite[Theorem 4.19]{Eisenbud}: if $R$ is a commutative Jacobson ring and $S$ is a finitely generated $R$-algebra then $S$ is Jacobson and whenever $P$ is a maximal ideal of $S$ we have $R\cap P$ is a maximal ideal of $R$ and $S/P$ is a finite extension of $R/(P\cap R)$.  In particular, when $R$ is a field this gives the classical Nullstellensatz: that if $S$ is a finitely generated $R$-algebra and if $P$ is a maximal ideal of $S$ then $S/P$ is a finite extension of $R$.  

In analogy with the commutative setting, we say that a noncommutative noetherian $k$-algebra $R$ satisfies the Nullstellensatz if the following hold:
\begin{enumerate}
\item $R$ is a Jacobson ring;
\item if $P$ is a primitive ideal of $R$ and $M$ is a faithful simple left $R/P$-module, then ${\rm End}_{R/P}(M)$ is an algebraic $k$-algebra.
\end{enumerate}
Then it is immediate that if $R$ is a left noetherian algebra that satisfies the Nullstellensatz then we have the following 
\begin{theorem}\label{thm:null} Let $k$ be a field and let $R$ be a left noetherian $k$-algebra that satisfies the Nullstellensatz.  Then we have:
\begin{enumerate}\item a locally closed prime ideal is primitive;
\item a primitive ideal is rational.
\end{enumerate}
\end{theorem}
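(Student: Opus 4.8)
The plan is to obtain both statements almost immediately from the machinery already assembled in this section, so the argument will be short. For (i), start with a locally closed prime ideal $P$ of $R$. Since $R$ satisfies the Nullstellensatz it is in particular a Jacobson ring, so the Jacobson radical of $R/P$ is zero; Proposition \ref{lc} then says exactly that a locally closed prime is primitive whenever this radical vanishes, and we are done. Note that Proposition \ref{lc} is stated for arbitrary rings, so no further hypothesis is needed at this point.

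For (ii), let $P$ be a primitive ideal and fix a faithful simple left $R/P$-module $M$. By Schur's lemma $\Delta := \mathrm{End}_{R/P}(M)$ is a division ring, and clause (2) in the definition of the Nullstellensatz tells us that $\Delta$ is algebraic over $k$. The key step is to realize the extended centre $Z(Q(R/P))$ as a subfield of $\Delta$. To do this, first observe that $R/P$, being a quotient of a left noetherian ring, is again prime and left noetherian, hence left Goldie, so by Goldie's theorem \cite[Theorem 6.15]{GW} the Goldie quotient ring $Q(R/P)$ exists and is a simple Artinian $k$-algebra, whose centre is therefore a field containing $k$. Now invoke \cite[Lemma II.7.13]{BrGo}, quoted earlier, which provides an embedding $Z(Q(R/P)) \hookrightarrow \mathrm{End}_{R/P}(M) = \Delta$. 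Since any subfield of an algebraic $k$-algebra is an algebraic extension of $k$, we conclude that $Z(Q(R/P))$ is algebraic over $k$; that is, $P$ is rational.

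The one point that deserves care — and the only place I expect any friction — is the transition from the left noetherian hypothesis to the good behaviour of $Q(R/P)$: one must check that $R/P$ is prime left Goldie so that Goldie's theorem yields a \emph{simple Artinian} quotient ring with a field centre, and one must have in hand the cited embedding of the extended centre into the endomorphism ring of a faithful simple module (this is where the two-sided noetherian conventions of the background discussion implicitly enter). Once those are secured, the proof is purely a matter of chaining together Proposition \ref{lc}, Goldie's theorem, and the two clauses of the Nullstellensatz; there is no genuinely hard estimate or construction involved.
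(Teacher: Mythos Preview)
Your proof is correct and follows essentially the same route as the paper's own argument: for (i) you invoke the Jacobson property and Proposition~\ref{lc}, and for (ii) you combine the embedding $Z(Q(R/P))\hookrightarrow \mathrm{End}_{R/P}(M)$ from \cite[Lemma II.7.13]{BrGo} with clause~(2) of the Nullstellensatz. The only difference is that you spell out more of the background (Schur's lemma, Goldie's theorem) than the paper does, which is harmless.
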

\begin{proof} Since $R$ is Jacobson, $J(R/P)=(0)$ and so if $P$ is locally closed, it is primitive by Proposition \ref{lc}.  If $P$ is primitive and $M$ is a faithful simple $R/P$-module then there is a $k$-algebra embedding of $Z(Q(R/P))$ into ${\rm End}_{R/P}(M)$ \cite[Lemma II.7.13]{BrGo} and so $Z(Q(R/P))$ is an algebra $k$-algebra that is an integral domain and hence it is an algebraic extension of $k$.  Thus $P$ is rational.
\end{proof}
The Nullstellensatz, as it turns out, is a rather broad phenomenon that holds in many settings.
We give an overview.
\begin{theorem} Let $k$ be a field and let $R$ be a $k$-algebra.  Then the following hold:
\begin{enumerate}
\item if there is a chain $k\subseteq R_1\subseteq \cdots \subseteq R_d=R$ of $k$-algebras such that for $i=0,1,\ldots ,d-1$, each $R_{i+1}$ is either a finitely generated left and right $R_i$-module or there is some $x\in R_{i+1}$ such that $R_{i+1}$ is generated by $R_i$ and $x$ and $R_i x + R_i = x R_i +R_i$ then $R$ satisfies the Nullstellensatz;
\item if ${\rm dim}_k(R) < |k|$ then $R$ satisfies the Nullstellensatz;
\item if $R$ is a finitely generated algebra satisfying a polynomial identity then $R$ satisfies the Nullstellensatz;
\item if $R$ is the group algebra of a polycyclic-by-finite group $G$ then $R$ satisfies the Nullstellensatz.
\end{enumerate}

\end{theorem}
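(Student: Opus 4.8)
The plan is to verify, in each of the four cases, the two requirements that constitute satisfying the Nullstellensatz: that $R$ is a Jacobson ring, and that for every primitive ideal $P$ and every faithful simple left $R/P$-module $M$ the division ring $\End_{R/P}(M)$ is algebraic over $k$. I would first dispose of (iv) by exhibiting it as a special case of (i). If $G$ is polycyclic-by-finite, pick a normal poly-(infinite cyclic) subgroup $H \trianglelefteq G$ of finite index, with a subnormal series $1 = H_0 \trianglelefteq H_1 \trianglelefteq \cdots \trianglelefteq H_s = H$ whose successive quotients are infinite cyclic. Each inclusion $k[H_i] \subseteq k[H_{i+1}]$ then factors as $k[H_i] \subseteq k[H_i][t_i;\sigma_i] \subseteq k[H_i][t_i^{\pm 1};\sigma_i] = k[H_{i+1}]$, where $\sigma_i$ is conjugation by a generator $t_i$ of $H_{i+1}/H_i$, which is an automorphism of $k[H_i]$ since $H_i \trianglelefteq H_{i+1}$. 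Each of these two steps adjoins a single element — first $t_i$, then $t_i^{-1}$ — that normalizes the preceding ring modulo itself (indeed $R' t_i = t_i R'$ at the first step), so the almost-normalizing hypothesis of (i) is met; and $k[G]$ is free of finite rank $[G:H]$ as a left and as a right $k[H]$-module, which is the finite-module alternative in (i). Thus $k[G]$ sits atop a chain of the kind described in (i).

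For (i) I would induct on the length $d$ of the chain, the base case $d=0$ being the classical Hilbert Nullstellensatz for $k$ itself. Suppose $R_i$ satisfies the Nullstellensatz. If $R_{i+1}$ is finitely generated as a left and as a right $R_i$-module, then the standard lying-over, going-up and incomparability results for such finite module extensions transport the Jacobson property from $R_i$ to $R_{i+1}$, while a simple $R_{i+1}$-module, being cyclic over $R_{i+1}$, is finitely generated hence of finite length over $R_i$, so its $R_{i+1}$-endomorphism ring — a division ring acting faithfully on it — is controlled by the $R_i$-endomorphism rings of its finitely many $R_i$-composition factors, which are algebraic over $k$ by induction. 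If instead $R_{i+1}$ is generated by $R_i$ and a single element $x$ with $R_i x + R_i = x R_i + R_i$, then $R_{i+1}$ is an \emph{almost normalizing extension} of $R_i$; here I would first check that $R_{i+1}$ is still (left) noetherian, filtered with $\mathrm{gr}\,R_{i+1}$ a homomorphic image of a skew polynomial ring over $R_i$, and then invoke the noncommutative generic-flatness machinery (in the spirit of McConnell--Robson) to show that the Jacobson radical of each prime quotient of $R_{i+1}$ vanishes and that the extended centre of each prime quotient remains algebraic over $k$. This almost-normalizing step is the crux and the place where I expect the real difficulty; the finite-module step and the bookkeeping that strings the chain together are comparatively routine.

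For (ii) the endomorphism requirement is the easy half. Given a primitive ideal $P$, a faithful simple $R/P$-module $M$ and $D = \End_{R/P}(M)$: since $M$ is cyclic, $\dim_k M \le \dim_k R < |k|$, and since $M$ is a faithful module over the division ring $D$, $\dim_k D \le \dim_k M$; if some $y \in D$ were transcendental over $k$ then $k(y)$ would embed in $D$ and the family $\{(y - \lambda)^{-1} : \lambda \in k\}$ would be $k$-linearly independent, forcing $\dim_k D \ge |k|$, a contradiction. For the Jacobson property, I would pass to a prime quotient $B$ of $R$ (still with $\dim_k B < |k|$) and show $J(B) = 0$: the same partial-fractions argument applied to $\{(1 - \lambda b)^{-1} : \lambda \in k\}$, whose members are units because $b \in J(B)$, shows every $b \in J(B)$ is algebraic over $k$; and if $b \in J(B)$ is algebraic, writing its minimal polynomial as $t^r g(t)$ with $g(0) \neq 0$ makes $g(b)$ a unit (a unit plus a radical element), so $b^r g(b) = 0$ forces $b^r = 0$. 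Hence $J(B)$ is nil, therefore nilpotent because $B$ is noetherian, therefore $(0)$ because $B$ is prime.

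For (iii), finitely generated algebras satisfying a polynomial identity are Jacobson rings — this is the PI Nullstellensatz of Amitsur and Procesi — which gives the first requirement. For the second, a primitive quotient $A/P$ is a primitive PI ring, so by Kaplansky's theorem recalled above it is $M_n(D)$ with $D$ a division ring of finite dimension over its centre $Z$; then $A/P$ is finitely generated as a module over the central subalgebra $Z$, which is therefore an affine $k$-algebra by the Artin--Tate lemma, and $Z$ is a field because $A/P = M_n(D)$ is simple artinian, so $Z$ is algebraic over $k$ by the commutative Nullstellensatz; hence $D$, and with it $\End_{A/P}(M) \cong D^{\mathrm{op}}$ for the simple module $M$, is algebraic over $k$. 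Assembling the four cases gives the theorem.
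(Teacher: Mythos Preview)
The paper does not actually prove this theorem: its ``proof'' consists entirely of citations---to Brown--Goodearl for (i) and (ii), to Rowen (for the Amitsur--Procesi theorem) for (iii), and to Brown's 1982 paper for (iv). Your proposal, by contrast, supplies genuine proof sketches for each part, and these are essentially correct and follow the standard arguments one finds in those references (and in McConnell--Robson).

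A few comparative remarks. Your reduction of (iv) to (i) is valid and is the modern way of treating group algebras of polycyclic-by-finite groups: they are iterated almost-normalizing extensions of $k$ followed by a finite free extension, exactly as you describe; the paper's separate citation of Brown reflects the original attribution rather than any logical independence from (i). For (i) itself you rightly isolate the generic-flatness machinery as the crux and honestly flag it as the step you do not carry out in detail; that is precisely where the real work in the cited sources lies. Your argument for (ii) is the standard Amitsur--Dixmier partial-fractions trick; you invoke noetherianity to pass from ``$J(B)$ is nil'' to ``$J(B)=0$'', which is harmless here since the paper only defines ``satisfies the Nullstellensatz'' for noetherian algebras in the first place. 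Your treatment of (iii) via Kaplansky's theorem together with the Artin--Tate lemma is again the standard route. In short, you have done considerably more than the paper does, and done it correctly.
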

\begin{proof} See Brown and Goodearl \cite[II.7]{BrGo} for (i)--(ii). 
The fact that affine PI algebras satisfy the Nullstellensatz is a result of Amitsur and Procesi (see \cite[Theorem 6.3.3]{Row2}).  Finally, Brown \cite{B82} proved the Nullstellensatz holds for the group algebra of a polycyclic-by-finite group.
\end{proof}
\subsection{Primitivity of skew polynomial rings}
\label{skew}
One of the most basic constructions of noncommutative rings comes via skew polynomial rings.  In this setting we have a base ring $R$, an automorphism $\sigma$ of $R$ and a $\sigma$-derivation $\delta$ of $R$; that is, for $a,b\in R$ we have $\delta(ab) = \sigma(a)\delta(b)+\delta(a)b$.  Then we can construct a ring $R[x;\sigma,\delta]$, called a skew polynomial ring, which, as a set, is just $R[x]$ but with multiplication extending $R$ given by
$$x\cdot r = \sigma(r) x + \delta(r).$$  In the case when $\delta=0$, it is customary to omit the $\sigma$-derivation and write $R[x;\sigma]$; and when $\sigma$ is the identity, $\delta$ is just a derivation and it is customary to write $R[x;\delta]$.  A natural question is: when is a ring of the form $R[x;\sigma,\delta]$ primitive?  For the case of a commutative noetherian base ring $R$, Goodearl and Warfield \cite{GoWa} characterized when $R[x;\delta]$ is primitive. Specializing to the case when $R$ is a finitely generated commutative domain over a field $k$ of characteristic zero, their result shows that $R[x;\delta]$ is primitive if and only if there is some maximal ideal $P$ of $R$ that does not contain a nonzero prime ideal $Q$ of $R$ that is invariant under the derivation $\delta$.  In general this is a sufficient condition to be primitive: indeed, Jordan \cite{Jor77} shows that if $R$ is a right noetherian ring with a derivation $\delta$, then $R[x;\delta]$ is primitive if there is a maximal right ideal $M$ of $R$ containing no nonzero $\delta$-invariant two-sided ideal of $R$, but in general counterexamples exist, and the Goodearl Warfield characterization is necessarily more complicated to deal with these examples.

In the case of automorphisms there are similar characterizations of primitivity.  When $R$ is a noetherian ring satisfying a polynomial identity with automorphism $\sigma$ a result of Leroy and Matzcuk \cite{LM96} characterizes when $R[x;\sigma]$ is primitive.  Given a ring $R$ with an automorphism $\sigma$, we say that $R$ is $\sigma$-\emph{special} if there is some $a\in R$ such that $r\sigma(r)\cdots \sigma^n(r)$ is nonzero for all $n\ge 1$ and such that every nonzero $\sigma$-stable ideal $I$ of $R$ contains $r\sigma(r)\cdots \sigma^n(r)$ for some $n\ge 1$, depending upon $I$.  Then Leroy and Matzcuk \cite[Theorem 3.10]{LM96} show that if $R$ is a commutative noetherian ring with automorphism $\sigma$, then $R[x;\sigma]$ is primitive if and only if $\sigma$ has infinite order and $R$ is $\sigma$-special.  Jordan \cite{Jor93} has done work on the skew Laurent case and has given an analogous characterization for commutative base rings $R$.
\section{The Dixmier-Moeglin equivalence}
One of the most appealing results in terms of characterizing primitive ideals in rings is the work of Dixmier and Moeglin \cite{Dix77, Moe80}, who showed that if $L$ is a finite-dimensional complex Lie algebra then the primitive ideals of the enveloping algebra $U(L)$ are just the prime ideals of ${\rm Spec}(U(L))$ that are locally closed in the Zariski topology. Furthermore, they proved that a prime ideal $P$ of $U(L)$ is primitive if and only if the Goldie ring of quotients of $U(L)/P$ has the property that its centre is just the base field of the complex numbers.  In general, for a field $k$ and a left noetherian $k$-algebra $R$, prime ideals $P$ for which the centre of the Goldie ring of quotients $Q(R/P)$ of $R/P$ has the property that its centre is an algebraic extension of $k$ are called \emph{rational} prime ideals.  Hence Dixmier and Moeglin's result can be regarded as saying that for primes $P$ of ${\rm Spec}(U(L))$ we have the following equivalences:
$$P ~{\bf locally~closed}\iff P~{\bf primitive}\iff P~{\bf rational}.$$
 
 In light of their work, we now say that a noetherian $k$-algebra $R$ satisfies the \emph{Dixmier-Moeglin equivalence} if we have the equivalence of the three above properties---primitivity, rationality, and local closedness---for primes in the spectrum of $R$.  The Dixmier-Moeglin equivalence is now known to be a very general phenomenon that holds for many classes of algebras beyond just enveloping algebras of finite-dimensional Lie algebras.  
 
 Some additional examples include affine PI algebras \cite[2.6]{Von1}, group algebras of nilpotent-by-finite groups \cite{Z} (see Theorem \ref{thm:nbf}), various quantum algebras \cite{GoLet} (and see \cite[II.8.5]{BrGo}), affine cocommutative Hopf algebras of finite Gelfand-Kirillov dimension in characteristic zero \cite{BL14}, Hopf Ore extensions of affine commutative Hopf algebras \cite{BSM18}, twisted homogeneous coordinate rings of surfaces \cite{BRS10}, Hopf algebras of Gelfand-Kirillov dimension two (under mild homological assumptions) \cite{GZ}, finitely generated connected graded domains of GK dimension two that are generated in degree one (cf. Artin and Stafford \cite{AS}), and even in settings where the noetherian property does not hold \cite{ABR, Lor09, Lor08} (see Section \ref{LorSec}) and generalizations involving stronger properties \cite{BLN}.

On the other hand, the equivalence is not universal and there are several finitely generated noetherian counterexamples are now known \cite{BLLM17, BCM17, Irv79, Lor}.  
 \subsection{History}
 The history of the Dixmier-Moeglin equivalence is as follows.  Dixmier \cite[Theorem C]{Dix77} proved a weaker version of what is now the Dixmier-Moeglin equivalence for enveloping algebras of complex finite-dimensional Lie algebras.  He showed in this case that for prime ideals $P$, the properties: primitivity, rationality, and the property of there being a countable set of elements $a_n\not\in P$ such that every prime ideal strictly containing $P$ must contain some $a_n$, are equivalent.  It is known that in the case of affine prime noetherian algebras over uncountable base fields, having $z\in Z(Q(R))$ that is transcendental over the base field gives rise to an uncountable set of height one prime ideals of $R$, by localizing at a suitable countable Ore set in which one obtains a ring with a non-algebraic centre and then employing a result of Jategaonkar \cite{Jat}; conversely, having an uncountable set of height one prime ideals gives that $Z(Q(R))$ is not algebraic over the base field (see \cite{Irv79a}).  This countability condition was later strengthened to the condition of $P$ being locally closed by Moeglin \cite{Moe80}.   
 
Much of the work on the Dixmier-Moeglin equivalence relates to Hopf algebras or Hopf-like algebras (e.g., bialgebras and deformations of bialgebras). Predating the Dixmier-Moeglin equivalence were results such as Kaplansky's theorem, which says that a primitive algebras satisfying a polynomial identity is isomorphic to a matrix ring over a division ring $D$ with $[D:Z(D)]<\infty$ and Zalesski\u\i's theorem \cite{Z}, which states that if $G$ is the group algebra of a finitely generated torsion-free nilpotent group then a primitive ideal $P$ of the group algebra of $G$ is maximal.  Both of these results characterize primitive ideals as being precisely the maximal ideals, and both of these algebras are known to satisfy the Dixmier-Moglin equivalence (see Theorem \ref{thm:DMEex}).  Snider \cite{Sni} showed that for a polycyclic group $G$ the group algebra of $G$ over a field that is not algebraic over a finite field has all primitive ideals maximal if and only if $G$ is nilpotent-by-finite, and Lorenz \cite{Lor} shortly thereafter gave an example of a polycyclic group that is not nilpotent-by finite whose group algebra does not satisfy the Dixmier-Moeglin equivalence.  In his example, $(0)$ is a primitive ideal of the group algebra that is not locally closed.  We now give the key result in showing that the Dixmier-Moeglin equivalence holds for affine algebras satisfying a polynomial identity as well as for certain group algebras.

\begin{theorem} Let $k$ be a field and let $A$ be a noetherian $k$-algebra and suppose that $A$ satisfies the Nullstellensatz.  If every $P$ in ${\rm Spec}(A)$ has the property that every nonzero ideal of $A/P$ intersects the centre of $A/P$ non-trivially then $A$ satisfies the Dixmier-Moeglin equivalence and all primitive ideals are maximal.\label{thm:max}
\end{theorem}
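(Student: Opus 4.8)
The strategy is to verify the three conditions of the Dixmier–Moeglin equivalence — that for every prime $P$ of $A$, being locally closed, primitive, and rational are equivalent — by reducing each to a statement about the factor ring $A/P$, and simultaneously show every primitive ideal is maximal. By passing to $A/P$, we may assume $P = (0)$, so $A$ is a prime noetherian $k$-algebra satisfying the Nullstellensatz and with the property that every nonzero ideal meets the centre $Z := Z(A)$ nontrivially. We want: $(0)$ locally closed $\iff$ $(0)$ primitive $\iff$ $(0)$ rational, and if $(0)$ is primitive then $A$ is simple (so $(0)$ is maximal). Since $A$ satisfies the Nullstellensatz it is Jacobson, so by Theorem \ref{thm:null} we already have ``locally closed $\Rightarrow$ primitive'' and ``primitive $\Rightarrow$ rational''; the work is to close the cycle and to get maximality.

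**Key steps.** First I would show that \emph{rational $\Rightarrow$ $(0)$ is a maximal ideal}. Suppose $A$ is not simple, so there is a nonzero proper ideal $I$; by hypothesis $I \cap Z \neq (0)$, so pick a nonzero central $z \in I$. Since $A$ is prime and $z$ is a nonzero central non-zero-divisor, $z$ is regular, hence invertible in the Goldie quotient ring $Q(A)$, and $z^{-1} \in Z(Q(A))$. The claim is that $z$ is transcendental over $k$: if $z$ satisfied a polynomial relation over $k$, its constant term would be forced to be nonzero (as $A$ is a domain — here one uses that a prime PI-flavored argument or, more simply, that $z$ regular and $k\cdot 1$ a field inside $Z$ forces the minimal relation to have nonzero constant term unless $z$ is itself a nonzero scalar, which is impossible since $z \in I$ proper), and then $z^{-1} \in k[z] \subseteq A$, contradicting $z \in I$ with $I$ proper (as then $1 = z \cdot z^{-1} \in I$). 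Hence $z$ is transcendental in $Z(Q(A))$, so $(0)$ is not rational. Contrapositively, rational forces $A$ simple, hence $(0)$ maximal. Combined with Theorem \ref{thm:null}(ii) (primitive $\Rightarrow$ rational), this gives primitive $\Rightarrow$ maximal, so all primitive ideals of $A$ are maximal. Next, \emph{maximal $\Rightarrow$ locally closed} is immediate: if $(0)$ is maximal then every nonzero prime properly contains $(0)$ vacuously — rather, there are no primes strictly containing a maximal ideal other than in the sense needed; more carefully, if $(0)$ is maximal then the intersection of primes properly containing $(0)$ is an intersection over a set of proper nonzero ideals, each containing a nonzero central element, so the intersection is nonzero (or the set is empty, in which case the intersection is all of $A$, again strictly larger than $(0)$) — either way $(0)$ is locally closed. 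This closes the loop: locally closed $\Rightarrow$ primitive $\Rightarrow$ rational $\Rightarrow$ maximal $\Rightarrow$ locally closed.

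**Main obstacle.** The delicate point is the transcendence argument: showing that a nonzero central element $z$ lying in a proper ideal $I$ cannot be algebraic over $k$ without producing the contradiction $z^{-1} \in A$. One must handle the case where $k$ is not algebraically closed and where the minimal polynomial of $z$ over $k$ might a priori have zero constant term; the resolution is that $z$ is a non-zero-divisor (as $A$ is prime and $z$ central nonzero), so $z$ cannot be a zero of a polynomial with zero constant term unless that polynomial is $X$ times something, and iterating strips all factors of $X$, leaving a relation with nonzero constant term — from which $z^{-1}$ is a polynomial in $z$ with coefficients in $k$, hence lies in $A$, forcing $I = A$. Once this lemma is in hand the rest is bookkeeping with the already-established implications from Theorem \ref{thm:null} and the Jacobson/Nullstellensatz hypotheses.
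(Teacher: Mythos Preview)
Your proof is correct and follows essentially the same route as the paper: use the Nullstellensatz to reduce the Dixmier--Moeglin equivalence to showing rational $\Rightarrow$ maximal, then exploit the central-element hypothesis. The paper phrases your ``main obstacle'' more directly---since $Z(A/P)$ is a domain embedding in $Z(Q(A/P))$, which is algebraic over $k$ by rationality, $Z(A/P)$ is itself a field, so any nonzero ideal of $A/P$ contains a central unit and $A/P$ is simple---which is precisely your transcendence argument run forward rather than by contraposition, and shows there is no genuine difficulty there.
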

\begin{proof}
Since $A$ satisfies the Nullstellensatz, to prove the Dixmier-Moeglin equivalence it suffices to prove that rational ideals are locally closed.  If $P$ is rational then $Z(A/P)$ is a field and since every nonzero ideal intersects the centre non-trivially, we see that $A/P$ is simple.  In particular, we have rational ideals are maximal and these are necessarily locally closed.  The result follows.
\end{proof}

\begin{theorem} Let $k$ be a field and let $A$ be either a finitely generated $k$-algebra satisfying a polynomial identity or the group algebra over $k$ of a finitely generated nilpotent group.  Then $A$ satisfies the Dixmier-Moeglin equivalence.\label{thm:DMEex}
\end{theorem}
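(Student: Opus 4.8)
The plan is to deduce both cases from Theorem~\ref{thm:max}: in each case one verifies that the algebra satisfies the Nullstellensatz and that, for every prime $P$, every nonzero ideal of $A/P$ meets the centre $Z(A/P)$ nontrivially, whereupon Theorem~\ref{thm:max} yields the Dixmier--Moeglin equivalence (and, as a bonus, that the primitive ideals are maximal). The Nullstellensatz part is already in hand: affine PI algebras satisfy it by Amitsur--Procesi \cite[Theorem~6.3.3]{Row2}, while for $G$ finitely generated nilpotent the group algebra $k[G]$ is noetherian --- it is polycyclic, and P.~Hall showed that group algebras of polycyclic-by-finite groups are noetherian --- and satisfies the Nullstellensatz by Brown's theorem \cite{B82}. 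So the real content is the centre-intersection condition.

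In the affine PI case this is a classical theorem of Rowen \cite{Rowenbook}: in a prime PI ring, every nonzero two-sided ideal meets the centre nontrivially. Since every homomorphic image of an affine PI algebra is again affine PI, this applies to the prime ring $A/P$ for each $P \in \Spec(A)$, and Theorem~\ref{thm:max} applies. (Theorem~\ref{thm:max} is stated for noetherian $A$, and an affine PI algebra need not be noetherian; but a prime PI ring is Goldie by Posner's theorem, with $Z(Q(A/P)) = \Frac\bigl(Z(A/P)\bigr)$, which is all that the proof of Theorem~\ref{thm:max} actually uses, so that argument goes through verbatim.)

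For $k[G]$ with $G$ finitely generated nilpotent, the needed input is the structural fact, essentially due to Zalesski\u\i \cite{Z}, that every nonzero ideal of a prime quotient $k[G]/P$ meets the centre of $k[G]/P$. I would prove this by first reducing to the torsion-free case --- the torsion elements of $G$ form a finite normal subgroup $T$, and one compares the ideal theory of $k[G]$ with that of $k[G/T]$ --- and then inducting on the Hirsch length of $G$, using that a nontrivial finitely generated torsion-free nilpotent group has infinite centre: after isolating a central copy of $\Z$ (so that the quotient stays torsion-free and the subgroup stays normal) one lowers the Hirsch length, and one tracks ideals and centres through the resulting skew-Laurent-type extension. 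Granting this, Theorem~\ref{thm:max} finishes the nilpotent case as before. I expect this last step to be the main obstacle: unlike the PI case it is not a one-line citation, and the torsion-free reduction together with the bookkeeping through the central-extension induction --- in particular arranging that the isolated central cyclic subgroup is normal and that the quotient remains torsion-free, so that the induction closes --- is where the real work lies.
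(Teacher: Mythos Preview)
Your proposal is correct and follows essentially the same route as the paper: both cases are deduced from Theorem~\ref{thm:max} by verifying the Nullstellensatz and the centre-intersection property for prime quotients, with the PI case handled by the Posner/Rowen theorem and the nilpotent group algebra case by the Zalesski\u\i-type fact (which the paper simply cites as \cite[Proposition~3.3]{BL14} rather than sketching). Your remark that Theorem~\ref{thm:max} only needs Goldie rather than noetherian, so that the argument genuinely applies to affine PI algebras, is a point the paper addresses only obliquely in Remark~3.3.
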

\begin{remark} We note that the group algebra of a polycyclic-by-finite group is noetherian, but a finitlely generated prime algebra satisfying a polynomial identity need not be; however, it is the case that such algebras satisfy the Goldie conditions required for localization and so one can still make sense of the rationality property in this setting.
\end{remark}

\begin{proof}[Proof of Theorem \ref{thm:DMEex}] The fact that $A$ satisfies the Nullstellensatz follows from Theorem \ref{thm:null}.  We first consider the polynomial identity case.  Here, a theorem of Posner \cite[Theorem 23.33]{Rowenbook} shows that every nonzero ideal of $A/P$ intersects the centre of $A/P$ non-trivially and hence $A/P$ is simple.  In particular $P$ is locally closed since it is maximal.  In the case of a group algebra of a finitely generated nilpotent group, we have the same result that every nonzero ideal of $A/P$ intersects the centre of $A/P$ (this result is well known and probably very old---see \cite[Proposition 3.3]{BL14} for a proof of a more general result).  Thus in both cases we obtain the result from Theorem \ref{thm:max}.  
\end{proof}
 
After the work of Dixmier and Moeglin, Goodearl and Letzter \cite{GoLet} showed that the Dixmier-Moeglin equivalence was a truly general phenomenon that applied to a large class of algebras. Their groundbreaking paper, in particular showed that it applied to many classes of quantized enveloping algebras and quantized coordinate rings of affine varieties. In particular, many of these algebras also enjoy a Hopf structure and the author and Leung \cite{BL14} asked whether the Dixmier-Moeglin equivalence holds for a complex noetherian fintiely generated Hopf algebra of finite Gelfand-Kirillov dimension.  In addition, this paper proves the result in the cocommutative case, which in light of Gromov's theorem on groups of polynomial growth (see \cite[Theroem 11.1]{KL}) can be seen as a unification of the results of Dixmier and Moeglin and work on the Dixmier-Moeglin equivalence for finitely generated nilpotent-by-finite groups (although, the proof is done via an induction, in which the enveloping algebra case is the base case).  Brown and Gilmartin \cite{BGil} ask a more modest question: does the Dixmier-Moeglin equivalence hold for pointed affine noetherian Hopf algebras of finite Gelfand-Kirillov dimension over an algebraically closed field $k$?  
 As far as Hopf algebras outside of these examples given above, the Dixmier-Moeglin equivalence has been proved for many Hopf algebras of small Gelfand-Kirillov dimension \cite{GZ}.  In addition, Brown, O'Hagan, Zhang, and Zhuang \cite{BOZZ} give the notion of an iterated Hopf Ore extension and it is known \cite{BSM18} that if $R$ is an affine commutative Hopf algebra that is an integral domain and $S=R[x;\sigma,\delta]$ has a Hopf algebra structure extending that of $R$ then $S$ satisfies the Dixmier-Moeglin equivalence.

\section{Goodearl-Letzter stratification and quantum algebras}
Arguably the greatest leap forward in work around the Dixmier-Moeglin equivalence was work of Goodearl and Letzter \cite{GoLet}, who showed that the Dixmier-Moeglin equivalence is a truly general phenomenon that holds for many classes of algebras.  In particular, they showed that algebras with a suitable rational algebraic group action will satisfy the equivalence provided that the collection of $G$-invariant prime ideals is finite.  Many quantum algebras fall into this framework.

We let $k$ be a field, let $A$ be a $k$-algebra, and let $G$ be an algebraic group over $k$ that acts as $k$-algebra automorphisms of $A$.  If for every $a\in A$ there is a finite-dimensional $k$-vector subspace $V$ of $A$ that contains $a$ such that $g\cdot V=V$ for every $g\in G$ such that the induced map $g\mapsto \Phi_g : V\to V$ is a morphism from $G\to {\rm GL}(V)$, then we say that $G$ acts \emph{rationally} on $A$. 

Given a prime ideal $P\in {\rm Spec}(A)$, we let $$J(P):=\bigcap_{g\in G} gP.$$  Then $J(P)$ is an intersection of prime ideals and if $A$ is left noetherian, we then see that $J(P)$ is a finite intersection of prime ideals $Q_1\cap \cdots \cap Q_d$.  Notice that $G$ acts transitively on the set $\{Q_1,\ldots, Q_d\}$ and since each $g\in G$ permutes the $Q_i$, and so this ideal $J(P)$ is $G$-invariant and a $G$-prime ideal; that is, if $L$ and $I$ are $G$-invariant ideals that contain $J(P)$ and $LI\subseteq J(P)$ then either $L$ or $I$ is equal to $J(P)$.

\begin{remark} Let $A$ be a left noetherian $k$-algebra with a rational action of an algebraic group $G$ and let $V$ be a finite-dimensional $G$-invariant subspace of $A$ that contains a generating set for an ideal $Q$ and let $W=Q\cap V$.  Then $\{g\in G\colon g\cdot W=W\}$ is a Zariski closed subset of $G$. In particular, the set of $g\in G$ such that $g\cdot Q=Q$ is a Zariski closed subset of $G$, and if $G$ is connected then a $G$-prime ideal is prime.  
\end{remark}
\begin{proof} Let $w_1,\ldots ,w_d$ be a basis for $W$.  Then we have map $f_i: G\to \wedge^{d+1}(V)$ given by $f_i(g)=g\cdot w_i \wedge w_1\wedge w_2\wedge \cdots \wedge w_d$. Then each $f_i$ is a morphism of algebraic varieties and $X:=\{g\in G\colon g\cdot W=W\}$ is just the intersection of $f_i^{-1}(0)$ for $i=1,\ldots ,d$.  In particular $X$ is Zariski closed.  Then we see that $X=\{g\in G\colon g\cdot W=W\}$ and so the set of $g\in G$ which preserve $Q$ is Zariski closed.  If $J$ is a $G$-prime ideal of $A$ then it is semiprime and since $A$ is left noetherian, $J$ is a finite intersection of prime ideals:
$$J=Q_1\cap \cdots \cap Q_d.$$  Then $G$ acts transitively on the $Q_i$ and if we let $h_i\in G$ be such that $h_i\cdot Q_1 = Q_i$ and we let $X=\{g\in G\colon g\cdot Q_1=Q_1\}$, then we see that $X_i := h_iX$ is Zariski closed for $i=1,\ldots ,d$ and $X_i=\{g\in G\colon g\cdot Q_1=Q_i\}$.  In particular, $G$ is a disjoint union of $d$ non-empty Zariski closed sets and since $G$ is connected, $d=1$; thus $J$ is prime.
\end{proof}

 We let $G$-${\rm Spec}(A)$ denote the set of $G$-invariant $G$-prime ideals, and for each $G$-invariant prime ideal $J$ we let 
$${\rm Spec}_J(A) = \{P\in {\rm Spec}(A)\colon J(P)=J\}.$$
\begin{theorem} (Goodearl-Letzter \cite{GoLet}) Let $k$ be an algebraically closed field and let $A$ be a noetherian $k$-algebra and that $G$ is a $k$-affine algebraic group that acts rationally on $A$. If $G$-${\rm Spec}(A)$ is finite then $A$ satisfies the Dixmier-Moeglin equivalence and for a $G$-invariant prime ideal $J$, the primitive ideals in ${\rm Spec}_J(A)$ are precisely the prime ideals that are maximal in ${\rm Spec}_J(A)$.  
\end{theorem}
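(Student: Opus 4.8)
The plan is to deduce the whole statement from the Goodearl--Letzter stratification of ${\rm Spec}(A)$ into the strata ${\rm Spec}_J(A)$, $J\in G$-${\rm Spec}(A)$, together with the Nullstellensatz. Since $A$ satisfies the Nullstellensatz (Theorem~\ref{thm:null}; this is part of the standing hypotheses in this context and holds in all the quantum examples of interest), the implications ``locally closed $\Rightarrow$ primitive $\Rightarrow$ rational'' are automatic, so the Dixmier--Moeglin equivalence reduces to proving ``rational $\Rightarrow$ locally closed'', and the final assertion emerges from the same analysis. The argument has three parts: (a) each stratum is locally closed in ${\rm Spec}(A)$; (b) each stratum is identified, as a topological space and as a poset, with ${\rm Spec}$ of a commutative Laurent polynomial ring over $k$; (c) these are combined.

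Part (a) is point-set bookkeeping, and is where finiteness of $G$-${\rm Spec}(A)$ first enters. Since $J$ is $G$-invariant, $J(P)=\bigcap_{g\in G}gP\supseteq J$ holds iff $gP\supseteq J$ for all $g$, i.e.\ iff $P\supseteq J$; so $\{P:J(P)\supseteq J\}=C_J$, and a short check using $G$-invariance of the $J'$ gives
$${\rm Spec}_J(A)\;=\;C_J\setminus\bigcup C_{J'},$$
the union taken over the \emph{finitely many} $J'\in G$-${\rm Spec}(A)$ properly containing $J$. This union is closed, so ${\rm Spec}_J(A)$ is open in $C_J$, hence locally closed in ${\rm Spec}(A)$. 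Consequently a point of ${\rm Spec}_J(A)$ that is locally closed in the subspace topology is locally closed in ${\rm Spec}(A)$; and tautologically $P$ is a maximal element of the poset ${\rm Spec}_J(A)$ precisely when $\{P\}$ is closed in the subspace topology on ${\rm Spec}_J(A)$.

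Part (b) is the technical core. Passing to $A/J$ we may assume $J=0$, so $A$ is $G$-prime noetherian. Using rationality of the action one builds a $G$-stable Ore set $\mathcal{E}$ of regular normal $G$-eigenvectors and forms the noetherian localization $A'=A\mathcal{E}^{-1}$; the facts to establish are that $A'$ is $G$-simple, that extension and contraction give an inclusion-preserving homeomorphism ${\rm Spec}_0(A)\cong{\rm Spec}(A')$, that contraction to the centre $C:=Z(A')$ gives a further inclusion-preserving homeomorphism ${\rm Spec}(A')\cong{\rm Spec}(C)$ (the primes of $A'$ being faithfully controlled by its centre), and --- here finiteness re-enters --- that $C$ is a commutative Laurent polynomial ring over $k$. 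For this last point, $C$ is spanned by central $G$-eigenvectors (rational action), the images of $\mathcal{E}$ providing invertible ones, while the invariant subring $C^G$ must be algebraic over $k$: a transcendental central invariant $z$ would yield the infinitely many distinct $G$-primes minimal over the ideals $(z-\lambda)A'$, $\lambda\in k$. As $k$ is algebraically closed and $G$-primeness forbids nontrivial idempotents in $C^G$, this forces $C^G=k$ and hence $C\cong k[z_1^{\pm1},\dots,z_n^{\pm1}]$.

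Part (c): transport back along ${\rm Spec}_J(A)\cong{\rm Spec}(A')\cong{\rm Spec}(C)$. The ring $C$ is a commutative affine $k$-algebra, so it is Jacobson and its locally closed points, closed points, and maximal ideals all coincide; and since $k=\overline{k}$, the classical Nullstellensatz identifies the maximal ideals of $C$ as those primes $\mathfrak{p}$ with ${\rm Frac}(C/\mathfrak{p})$ algebraic over $k$. For $P\in{\rm Spec}_J(A)$ corresponding to $\mathfrak{p}$ one has $Z(Q(A/P))=Z(Q(A'/PA'))={\rm Frac}(C/\mathfrak{p})$ (localizing at $\mathcal{E}$ does not change the Goldie quotient ring), so $P$ rational $\iff$ $\mathfrak{p}$ maximal $\iff$ $P$ maximal in the poset ${\rm Spec}_J(A)$ $\iff$ $\{P\}$ closed, equivalently locally closed, in ${\rm Spec}_J(A)$, which by (a) gives that $P$ is locally closed in ${\rm Spec}(A)$. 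This closes the equivalence and at the same time shows the primitive ($=$ rational) ideals of ${\rm Spec}_J(A)$ are exactly the primes maximal in ${\rm Spec}_J(A)$. The main obstacle is squarely in part (b): constructing the $G$-stable Ore set and proving that the localization is $G$-simple with prime spectrum mirrored by a central Laurent polynomial ring --- this is where the full strength of the rationality of the action and of noetherianity is used; parts (a) and (c) are comparatively routine.
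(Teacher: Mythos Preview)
The paper does not prove this theorem; it is stated with attribution to Goodearl--Letzter and no argument is given, so there is no paper proof to compare against. Your outline is essentially the stratification-theoretic argument that works when $G$ is a \emph{torus}, and parts (a) and (c) are sound.

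Part (b), however, has a genuine gap for a general affine algebraic group $G$. The claim that $C=Z(A')$ is ``spanned by central $G$-eigenvectors (rational action)'' fails unless $G$ is diagonalizable: rationality only guarantees that each element lies in a finite-dimensional $G$-stable subspace, not that such subspaces split into lines. Concretely, let $G=\mathrm{SL}_2(k)$ act on $A=k[x,y]$ by the standard representation; then $G$-${\rm Spec}(A)=\{(0),(x,y)\}$ is finite, yet the character group $X(G)$ is trivial, so the only $G$-eigenvectors in $A$ are scalars, your Ore set $\mathcal{E}$ reduces to $k^{\times}$, and $A'=A$ is certainly not $G$-simple. Thus neither the homeomorphism ${\rm Spec}_J(A)\cong{\rm Spec}(C)$ nor the Laurent-polynomial description of $C$ is available in general; the latter is a torus-specific phenomenon (the paper itself alludes to this later, when citing Lorenz's result ``when the group is a torus'').

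For arbitrary $G$ the standard route to ``rational $\Rightarrow$ maximal in its stratum'' goes instead through the Moeglin--Rentschler/Vonessen transitivity theorem, which the paper cites immediately after the statement: the rational primes in each stratum form a single $G$-orbit. Granting this, if $P$ is rational but not maximal in ${\rm Spec}_J(A)$, choose $P''\supsetneq P$ maximal in that stratum (possible by noetherianity); since the stratum is locally closed by your part (a), $P''$ is locally closed in ${\rm Spec}(A)$, hence primitive and then rational by the Nullstellensatz, hence $P''=gP$ for some $g\in G$, and applying $g$ repeatedly gives $P\subsetneq gP\subsetneq g^{2}P\subsetneq\cdots$, contradicting noetherianity. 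You are right to flag the Nullstellensatz as a tacit hypothesis that the paper's statement suppresses.
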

We note that work of Moeglin-Rentschler \cite{MR1, MR2} and Vonessen \cite{Von1, Von2} shows that under these hypotheses that $G$ acts transitively on the rational prime ideals in each stratum.  As an example of a quick application of the Goodearl-Letzter thoerem and its great power, we observe that if  
$A$ is a connected affine noetherian $\mathbb{N}$-graded $k$-algebra over an algebraically closed field $k$, then $A$ has a $k^*$-action.  Suppose that the collection of homogeneous prime ideals of $A$ is finite.  Then $A$ has a rational $k^*$-action, by declaring that $\lambda\cdot a=\lambda^d a$ for $\lambda\in k^*$ and $a$ homogeneous of degree $d$.  Then we see that if $A$ has only finitely many homogeneous prime ideals then $A$ satisfies the Dixmier-Moeglin equivalence.  

We illustrate the Goodearl-Letzter stratification process in the case of the quantum affine $n$-space, $A=\mathbb{C}_q[x_1,\ldots ,x_n]$; that is, the algebra generated by $x_1,\ldots ,x_n$ with $x_ix_j=qx_j x_i$.  We assume that $q\in \mathbb{C}$ is nonzero and not a root of unity.  In this case we have an action of $(\mathbb{C}^*)^n$ given by $(\lambda_1,\ldots ,\lambda_n)\cdot x_i = \lambda_i x_i$.  Notice that since the variables skew commute, every element of $A$ can be written as a linear combination of monomials $x_1^{p_1}\cdots x_n^{p_n}$ and these monomials form a $\mathbb{C}$-basis for $A$.  Notice that given a monomial $m=x_1^{p_1}\cdots x_n^{p_n}$ we have $\mathbb{C}m$ is $G$-invariant and the torus action induces a $\mathbb{Z}^n$-grading of $A$ and $x_1^{p_1}\cdots x_n^{p_n}$ has degree $(p_1,\ldots ,p_n)$.  In particular, the only homogeneous elements of $A$ are scalar multiples of monomials and so any $G$-invariant ideal is generated by monomials.  Moreover, since the variables $x_1,\ldots ,x_n$ are normal (that is, $x_i A=Ax_i$ for $i=1,\ldots ,n$), we see that if $P$ is a $G$-invariant prime ideal and $x_1^{p_1}\cdots x_n^{p_n}\in P$ then there is some $i$ with $p_i>0$ such that $x_i\in P$.  In particular, each $G$-invariant prime ideal is generated by a subset of $\{x_1,\ldots ,x_n\}$.  It is straightforward to check that each ideal generated by a subset of these variables is prime, since the quotient is isomorphic to a smaller quantum affine space.  Thus there are precisely $2^n$ $G$-invariant prime ideals and so the Goodearl-Letzter theorem shows that $A$ satisfies the Dixmier-Moeglin equivalence.  Observe that if $J=\langle\{x_s \colon s\in S\}\rangle$ for some subset $S$ of $\{1,\ldots ,n\}$ then a prime ideal $P$ is in ${\rm Spec}_J(A)$ if $P$ contains $J$ and if $x_i\not \in P$ for $i\not \in S$.  Since $A/J$ is a quantum affine space, it suffices to consider the case when $J=(0)$ in $\mathbb{C}_q[x_1,\ldots ,x_m]$.  Then by the above remarks, ${\rm Spec}_J(A)$ is homeomorphic to 
the prime spectrum of $\mathbb{C}_q[x_1^{\pm 1},\ldots ,x_m^{\pm 1}]$.  In this case, they are able to show more.  Namely that the centre of this localization controls the spectrum; that is, each stratum is homeomorphic to a complex torus.  

In fact, these results apply to a large class of quantum groups (see \cite[Chapter II.8]{BrGo} for more details).  
\subsection{CGL extensions}
The work of Goodearl and Letzter shows that many iterated skew polynomial extensions, today called CGL extensions, where CGL stands for Cauchon-Goodearl-Letzter, satisfy the Dixmier-Moeglin equivalence.
An iterated skew polynomial extension
$$k[x_1][x_2;\sigma_2,\delta_2]\cdots [x_n;\sigma_n;\delta_n]$$ is called a CGL extension if the following conditions hold:
\begin{enumerate}
\item If we let $A_m=k[x_1][x_2;\sigma_2,\delta_2]\cdots [x_m;\sigma_m;\delta_m]$ for $m\le n$, then each $\sigma_m$ is a $k$-algebra automorphism of $A_{m-1}$ and $\delta_m$ is a $k$-linear locally nilpotent $\sigma_m$-derivation of $A_{m-1}$;
\item for $m=1,\ldots ,n$, $\sigma_m \circ \delta_m = q_m \delta_m \circ\sigma_m$ for some $q_m\in k^*$;
\item for $i<j$, $\sigma_j(x_i)=c_{i,j} x_i$ for some $c_{i,j}\in k^*$;
\item there is a torus $G=(k^*)^p$ that acts rationally on $A$ by $k$-algebra automorphisms;
\item the $x_i$ are $G$-eigenvectors; that is, $g\cdot x_i = \lambda_i(g)x_i$ for some character $\lambda_i: G\to k^*$ of $G$ for $i=1,\ldots ,n$;
\item there are elements $g_i\in G$ for $i=1,\dots, n$ such that $g_j\cdot x_i = \sigma_j(x_i)$ for $j>i$.
\end{enumerate}
This is a broad class of algebras that includes many quantum algebras at non-roots of unity.  For example, the quantized coordinate ring of $2\times 2$ matrices, $\mathcal{O}_q(M_2)$, is the $\mathbb{C}$-algebra with generators $a,b,c,d$ and relations
$$ab=qba, cd=qdc, ac=qca, bd=qdb, bc=cb, ad-da=(q-q^{-1})bc.$$
Then we can write this as 
$k[b][c][d;\sigma][a;\tau,\delta]$, where $\sigma(b)=qb$, $\sigma(c)=qc$, $\tau(b)=qb$, $\tau(c)=qc$,
$\tau(d)=d$ and $\delta(b)=\delta(c)=0$ and $\delta(d) = (q-q^{-1}) bc$.  Then $\delta^2(d) =0$ and so we see that $\delta$ is locally nilpotent since $\delta^2$ annihilates a generating set for $k[b][c][d;\sigma]$.  We have a torus action of $(k^*)^4$ on $A$ given by $(\alpha_1,\alpha_2,\beta_1,\beta_2)\cdot u_{i,j} = \alpha_i\beta_j u_{i,j}$ for $i,j=1,2$, where $u_{1,1},u_{1,2},u_{2,1},u_{2,2}$ are respectively $a,b,c,d$. (This can be thought of as coming from putting our generators in a $2\times 2$ matrix 
\[ \left( \begin{array}{cc} a & b \\ c & d \end{array}\right) \] and notice that relations still hold after scaling a given row by a scalar or a given column by a scalar.) We note that this action is morally a $(k^*)^3$-action since the $1$-dimensional subgroup $\{(\alpha,\alpha,\alpha^{-1},\alpha^{-1})\colon \alpha\in k^*\}$ acts trivially on $A$.  Then with this torus action it is easily seen that the ring of quantum $2\times 2$ matrices is a CGL extension.

Much of the literature involving skew polynomial extensions $R[x;\sigma,\delta]$ focuses on the case when either $\delta=0$ (i.e., the automorphism/endomorphism case) or the case when $\sigma$ is the identity (i.e., the pure derivation case).  The main reason for this is that when one leaves these cases, understanding the structure theory of the skew polynomial ring becomes considerably more difficult. One of the amazing features of CGL extensions, despite the fact that they involve both automorphisms and $\sigma$-derivations, is that, for the purposes of understanding their spectra, one can pass to the pure automorphism case via the so-called deleting derivations algorithm, which was first given by Cauchon \cite{Cau}; the main idea behind this algorithm is that by passing to a suitable localization one can show that the resulting algebra is isomorphic to a skew polynomial algebra in which one has only automorphisms and no derivations; then, by working backwards, one can reconstruct the spectrum in a step-by-step manner.  This, in particular, has allowed one to go in many cases well beyond the understanding of the prime and primitive spectrum affording by knowing the Dixmier-Moeglin equivalence and this technique has since been used to great effect by numerous authors---for a small sample, see \cite{Cas, GLL, LM11, Yak1, Yak2}.
 
\section{The Dixmier-Moeglin equivalence for related algebras}
One of the signs of how robust an algebraic property is the extent to which the property is stable when one passes to closely related algebras.  We study a few of these properties.

\subsection{Letzter's work}
One of the earliest results in this direction with the Dixmier-Moeglin equivalence is work of Letzter \cite{Let89}, which is of huge importance in understanding the Dixmier-Moeglin equivalence.  In his setting, he studies algebras $R\subseteq S$ in which $S$ is a finite left or right $R$-module.  Here we can ask to what extent properties of the zero ideal being rational, locally closed, and being primitive (when $R$ and $S$ are both prime) can be transferred from one ring to the other.  Letzter's paper is a great resource for anyone dealing with relating the rationality, locally closed, or primitive properties to rings where one ring is a finite module over the other; but we focus our attention on two of the main highlights of his work.

\begin{theorem} (Letzter \cite{Let89}) Let $k$ be a field and let $R\subseteq S$ be $k$-algebras with $S$ finitely generated on both sides as an $R$-module.  If primitive ideals of $R$ are rational then primitive ideals of $S$ are rational and if rational ideals of $R$ are primitive then rational ideals of $S$ are primitive. \end{theorem}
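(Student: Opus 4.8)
The statement relates properties of the zero ideal (equivalently, of arbitrary primes after quotienting) for a ring extension $R \subseteq S$ with $S$ finite as a module over $R$ on both sides. The natural strategy is to pass between primes of $R$ and primes of $S$ via lying-over / going-up type results, which hold for such module-finite extensions even in the noncommutative setting (this is part of Letzter's toolkit, following the noncommutative Nullstellensatz and results on prime ideals in module-finite extensions). So the first reduction I would make is: given a primitive (resp.\ rational) prime $Q$ of $S$, set $P = Q \cap R$; then $P$ is prime, and $S/Q$ is a module-finite extension of the prime ring $R/P$ on both sides. After replacing $R$ by $R/P$ and $S$ by $S/Q$, I may assume $Q = (0)$ in $S$ and $P = (0)$ in $R$, with both rings prime noetherian-enough to form Goldie quotient rings.

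\textbf{Rationality transfer.} For the first implication I would work with the Goldie quotient rings $Q(R) \subseteq Q(S)$ (one needs that localizing at the regular elements of $R$ still gives a module-finite extension, which follows because $S$ is finite over $R$ on both sides, so regular elements of $R$ remain regular-ish and $Q(S) = Q(R) \otimes_R S$ up to care). Then $Z(Q(S))$ is a field extension of $Z(Q(R))$, and since $Q(S)$ is module-finite over $Q(R)$, the center $Z(Q(S))$ is finite-dimensional over $Z(Q(R))$ — this is the crucial finiteness. Now suppose primitive ideals of $R$ are rational. If $(0)$ is primitive in $S$ with faithful simple module $M$, I want to produce a primitive ideal of $R$ that controls things: restrict $M$ to $R$; because $S$ is finite over $R$, $M$ is a finite-length (in fact noetherian) $R$-module, so it has a simple $R$-submodule $N$ with annihilator $P'$, a primitive ideal of $R$. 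One checks $P' \cap$ (something) is small enough that $Z(Q(R))$ is algebraic over $k$ by hypothesis, and then $Z(Q(S))$, being finite over $Z(Q(R))$, is algebraic over $k$ too; hence $(0)$ is rational in $S$. The symmetric statement (rational $\Rightarrow$ primitive descends to $S$) runs the other way: from a rational $(0)$ in $S$ one shows the contracted ideal in $R$ is rational, hence primitive by hypothesis, and then lifts primitivity back up to $S$ using that $S$ is finite over $R$ (a faithful simple $R$-module induces, via $S \otimes_R -$ and taking a simple quotient, a faithful simple $S$-module, using primeness of $S$).

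\textbf{The main obstacle.} The delicate point is not the center-finiteness (which is linear algebra over a division ring once one has $Q(S)$ module-finite over $Q(R)$) but rather the \emph{faithfulness} bookkeeping: when restricting a simple $S$-module to $R$ and picking a simple submodule $N$, the annihilator $\mathrm{Ann}_R(N)$ need not contract correctly from $(0)$ — one must instead consider the (finitely many) primes of $R$ minimal over $\mathrm{Ann}_R(M)$ and argue that at least one of them has the desired smallness, using that $S$ is prime so $\mathrm{Ann}_R(M)$ contains no nonzero $S$-stable ideal, combined with a going-up argument to see that $(0)$ is the only prime of $S$ contracting appropriately. Dually, lifting primitivity from $R$ to $S$ requires knowing that $S \otimes_R N$ has a faithful simple quotient as an $S$-module, which uses the finite-module hypothesis to guarantee the relevant module is nonzero and noetherian, plus primeness of $S$ to upgrade "has zero annihilator among $S$-stable ideals" to genuine faithfulness. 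I expect these faithfulness/contraction arguments — essentially the noncommutative analogues of lying-over and incomparability for module-finite extensions — to be where all the real work sits, whereas everything about centers and Goldie quotients is formal once that scaffolding is in place.
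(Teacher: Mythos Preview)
The paper does not prove this theorem at all: it is stated with attribution to Letzter \cite{Let89} and no argument is given or even sketched in the survey. There is therefore no ``paper's own proof'' against which to compare your proposal.

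As for your sketch itself, the overall shape is the right one and matches the strategy in Letzter's original paper: reduce to the prime case by passing to $R/(Q\cap R)\subseteq S/Q$, compare the extended centres via the module-finite extension of Goldie quotient rings, and shuttle primitivity back and forth by restricting/inducing simple modules. You have also correctly identified where the real content lies, namely the lying-over/incomparability behaviour of primes in a (two-sided) module-finite noncommutative extension and the faithfulness bookkeeping when moving simple modules between $R$ and $S$. A couple of points where your outline is loose: the claim that $Q(S)$ is module-finite over $Q(R)$ and that $Z(Q(S))$ is then finite over $Z(Q(R))$ requires more care than ``linear algebra over a division ring'' (one must check that regular elements of $R$ stay regular in $S$ and that the central elements of $Q(S)$ actually land in a finite extension, which in Letzter's argument uses the Artinian structure of $Q(R)$); and in the rational-implies-primitive direction you need to know that if $Q\cap R$ is primitive in $R$ then some prime of $S$ lying over it is primitive and moreover that $Q$ itself is among those---this uses incomparability plus an argument that primitivity passes up, not merely that \emph{some} simple $S$-module is produced. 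These are exactly the lemmas Letzter isolates, so your diagnosis of the ``main obstacle'' is accurate even if the sketch does not yet discharge it.
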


\begin{theorem} (Letzter \cite{Let89}) Let $k$ be a field and let $R\subseteq S$ be $k$-algebras with $S$ finitely generated and free as a left and right $R$-module. If $S$ has finite GK dimension then all primitive ideals of $R$ are locally closed in ${\rm Spec}(R)$ if and only if all primitive ideals of $S$ are locally closed in ${\rm Spec}(S)$.
\end{theorem}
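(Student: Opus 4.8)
The plan is to translate the statement into a combinatorial fact about how $\mathrm{Spec}(R)$ and $\mathrm{Spec}(S)$ are matched up by the finite extension, and then to transport local closedness across that matching in each direction. The tools are the prime-ideal theory of finite extensions --- lying over, going up, and incomparability for $R\subseteq S$, available because $S$ is finitely generated on both sides as an $R$-module --- together with the fact that a finite module extension preserves Gelfand--Kirillov dimension, so that $\mathrm{GKdim}(R)=\mathrm{GKdim}(S)<\infty$. The latter is what makes local closedness tractable: all heights in both spectra are then finite, so (as in the noetherian discussion above) a prime $P$ is locally closed exactly when the set of primes properly containing $P$ has only finitely many minimal elements, which I will call the \emph{covers} of $P$. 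A preliminary step is to check that $R$ inherits from $S$ enough finiteness --- the Goldie conditions, and noetherianness if one wants it --- by faithful flatness, so that all these notions are meaningful, and that the GK dimensions agree. Since $R\subseteq S$ is not symmetric, the two implications are proved separately, though along parallel lines.

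Next I would set up the dictionary between primitive ideals on the two sides. If $Q=\mathrm{Ann}_S(M)$ with $M$ a simple $S$-module, then $M$ is a finitely generated, hence finite-length, $R$-module, and the annihilators of its composition factors form a finite set $\mathfrak p_1,\dots,\mathfrak p_t$ of primitive ideals of $R$, each a minimal prime over $Q\cap R$. In the other direction, given a primitive ideal $\mathfrak p$ of $R$ with simple module $N$, the $S$-module $S\otimes_R N$ is nonzero (freeness makes $S$ faithfully flat over $R$) and finitely generated, so it has a simple quotient whose annihilator is a primitive ideal $Q$ of $S$; arranging that $Q$ can be chosen so that $\mathfrak p$ is genuinely one of the primes $Q$ lies over, rather than something strictly larger, is one of the delicate points, and is exactly the kind of statement Letzter's paper supplies.

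With the dictionary available, the core of the argument is to compare covers. For primes $Q\subseteq Q'$ of $S$, incomparability forces $Q\cap R\subsetneq Q'\cap R$; combining going up with a height comparison (heights being finite and respected by the finite extension), one shows that a minimal prime over $Q'\cap R$ must properly contain a minimal prime over $Q\cap R$, and dually that every prime of $R$ properly containing $\mathfrak p$ has a prime of $S$ lying over it that properly contains a fixed $Q$ over $\mathfrak p$. Specializing to covers, one gets that the covers of $Q$ are finite in number precisely when the covers of the associated $\mathfrak p_1,\dots,\mathfrak p_t$ are, and symmetrically that the covers of $\mathfrak p$ are finite precisely when the covers of a chosen $Q$ over $\mathfrak p$ are. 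Feeding in the hypothesis --- all primitive ideals on one side have finitely many covers --- together with the finiteness of the families appearing in the dictionary then yields the conclusion on the other side.

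The step I expect to be the real obstacle is the bookkeeping at the level of primes that are not simply contracted from one another: $Q\cap R$ need not be prime, $\mathfrak pS$ need not have $\mathfrak p$-controlled minimal primes, and so each comparison above has to be carried out through finite sets of minimal primes with careful attention to which cover lies over which. This is precisely where finiteness of $\mathrm{GKdim}(S)$, hence of $\mathrm{GKdim}(R)$, does essential work: it supplies a dimension/height function that the finite extension respects, which is what turns a strict containment of primes upstairs into a strict containment of associated primes downstairs, and so lets one match, up to finite ambiguity, the covers on the two sides. Without such an anchor the transfer must break down, as the polycyclic-group example of Lorenz shows, where $(0)$ is primitive but not locally closed.
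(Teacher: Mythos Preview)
The survey does not prove this theorem; it is stated and attributed to Letzter \cite{Let89} without proof, so there is no in-paper argument to compare against. What follows is therefore an assessment of your sketch on its own terms.

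Your outline has the right architecture---lying over, going up, and incomparability for a ring extension that is module-finite on both sides, combined with the Letzter-style dictionary between primitive ideals upstairs and downstairs. But there is a genuine gap in the way you propose to use the hypothesis of finite GK dimension. You reduce ``locally closed'' to ``finitely many covers,'' citing the noetherian discussion earlier in the paper, and then say you will obtain noetherianness for $R$ from $S$ by faithful flatness. The trouble is that $S$ is \emph{not} assumed noetherian in the statement, so there is nothing to inherit; and without noetherianness the equivalence between ``locally closed'' and ``finitely many minimal overprimes'' breaks down (one direction survives, the other does not). So the combinatorial reformulation you want to run the whole argument through is not available from the stated hypotheses.

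The actual work that finite GK dimension does in Letzter's argument is subtler than supplying a height function that matches across the extension: it is used to bound the number of primes in $S$ minimal over an ideal of the form $PS$ (respectively, primes of $R$ minimal over $Q\cap R$), via the additivity and partitivity properties of GK dimension for prime factors, so that the relevant ``lying over'' fibres are finite even without a noetherian hypothesis. Your sketch gestures at this in the final paragraph but does not isolate it as the crux. If you want to repair the argument, drop the covers reformulation and work directly with the definition of locally closed (the intersection of strictly larger primes is strictly larger), transporting that intersection across the extension using the finiteness of the fibres supplied by GK dimension together with incomparability.
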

Notice that in the case that $R\subseteq S$ are $k$-algebras with $S$ a finitely generated free left and right $R$-module and $S$ noetherian of finite GK dimension and $R$ and $S$ satisfy the Nullstellensatz then we have that $R$ satisfies the Dixmier-Moeglin equivalence if and only if $S$ does.

As an application of Letzter's results, we prove the following folklore theorem, which we attributed to Zalesski\u\i, although it was apparently not stated in this form.
\begin{theorem} Let $G$ be a finitely generated nilpotent-by-finite group and let $k$ be a field.  Then $k[G]$ satisfies the Dixmier-Moeglin equivalence.\label{thm:nbf}
\end{theorem}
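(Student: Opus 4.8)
The plan is to bootstrap from the nilpotent case, Theorem~\ref{thm:DMEex}, using Letzter's transfer results for module-finite extensions (or, equivalently, the remark immediately following them). The first step is purely group-theoretic. Since $G$ is nilpotent-by-finite it contains a nilpotent subgroup $H$ of finite index; replacing $H$ by its normal core $N:=\bigcap_{g\in G}gHg^{-1}$ — an intersection of finitely many conjugates of $H$, hence of finite index, and contained in $H$, hence nilpotent — we get a \emph{normal} nilpotent subgroup $N\trianglelefteq G$ of finite index. Because $G$ is finitely generated and $[G:N]<\infty$, the subgroup $N$ is finitely generated as well. Note also that $N$, being finitely generated nilpotent, is polycyclic, so $G$ is polycyclic-by-finite.

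Now set $R:=k[N]\subseteq S:=k[G]$. Choosing a transversal of $N$ in $G$ exhibits $S$ as a free left $R$-module and as a free right $R$-module, in both cases of rank $[G:N]<\infty$. Moreover: $S$ is noetherian, since the group algebra of a polycyclic-by-finite group is noetherian; $S$ has finite Gelfand--Kirillov dimension (equal to the Hirsch length of $G$), by the classical computation of $\operatorname{GKdim}k[G]$ for polycyclic-by-finite $G$; and both $R$ and $S$ satisfy the Nullstellensatz, by part~(iv) of the preceding theorem (Brown's theorem on group algebras of polycyclic-by-finite groups). Finally, $R=k[N]$ satisfies the Dixmier--Moeglin equivalence by Theorem~\ref{thm:DMEex}.

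It then remains to transfer the equivalence from $R$ to $S$. Since $R$ and $S$ each satisfy the Nullstellensatz, for either ring local closedness implies primitivity and primitivity implies rationality by Theorem~\ref{thm:null}, so the only thing to establish for $S$ is that rational ideals are locally closed. Because rational ideals of $R$ are primitive (DME for $R$), Letzter's first theorem gives that rational ideals of $S$ are primitive; because every primitive ideal of $R$ is locally closed (DME for $R$), Letzter's second theorem — applicable precisely because $S$ is a finitely generated free left and right $R$-module of finite GK dimension — gives that every primitive ideal of $S$ is locally closed. Chaining these, a rational ideal of $S$ is primitive, hence locally closed, and the Nullstellensatz for $S$ supplies the reverse implications. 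Thus in $S=k[G]$ the three properties primitive, rational, and locally closed coincide, which is the assertion.

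The substance of the argument is entirely contained in Theorem~\ref{thm:DMEex} and Letzter's theorems, both of which we may assume; the residual work is only to verify their hypotheses, and the one point that merits care is that $k[G]$ is genuinely \emph{free} — not merely projective — as a one-sided $k[N]$-module, which is immediate from the transversal basis, together with the finiteness of $\operatorname{GKdim}k[G]$. With those in hand the proof is a formal diagram chase through the implications furnished by the Nullstellensatz.
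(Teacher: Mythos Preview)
Your proof is correct and follows essentially the same route as the paper's: pass to a finite-index nilpotent subgroup $N$, observe that $k[G]$ is finitely generated and free as a left and right $k[N]$-module, verify the noetherian, finite-GK, and Nullstellensatz hypotheses, invoke Theorem~\ref{thm:DMEex} for $k[N]$, and then transfer via Letzter's theorems. The paper is terser---it cites the Bass--Guivarch theorem for finite GK dimension and does not bother passing to the normal core (normality of $N$ is not actually needed, since a transversal already gives freeness on each side)---but the skeleton is identical.
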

\begin{proof} Let $N$ be a finite-index nilpotent subgroup of $G$. Then $S=k[G]$ is a finitely generated free left and right $R=k[N]$-module. Then $S$ is noetherian and satisfies the Nullstellensatz and has finite GK dimension by the Bass-Guivarch theorem (see Krause and Lenagan \cite[Theorem 11.14]{KL}).  Then by Theorem \ref{thm:DMEex} we have $R$ satisfies the Dixmier-Moeglin equivalence, so by Letzter's thoerems we have that $S$ does too.
\end{proof}
\subsection{Irving-Small reduction}
When studying algebras over a field $k$ it is often much simpler to deal with the case that $k$ is algebraically closed.  Let $A$ be a $k$-algebra and let $\bar{k}$ denote the algebraic closure of $A$.  Suppose that $A\otimes_k \bar{k}$ is noetherian and satisfies the Nullstellensatz. 
\begin{theorem} (Irving-Small) Let $k$ be a field of characteristic zero, let $A$ be a finitely generated $k$-algebra, and suppose that $A\otimes_k K$ is left noetherian and satisfies the Nullstellensatz for every extension $K$ of $k$.  If rational ideals are primitive in $A\otimes_k \bar{k}$ and primitive ideals are locally closed in $A\otimes_k K$ for some algebraically closed uncountable field extension of $k$ then the Dixmier-Moeglin equivalence holds for $A$.
\end{theorem}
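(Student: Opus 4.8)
The plan is to derive the Dixmier--Moeglin equivalence for $A$ from the single implication ``rational $\Rightarrow$ locally closed'', the other two being automatic: taking $K=k$, the algebra $A$ is itself left noetherian and satisfies the Nullstellensatz, so Theorem~\ref{thm:null} already gives, for primes of $A$, that locally closed $\Rightarrow$ primitive $\Rightarrow$ rational. Given a rational prime $P$ of $A$, I would first replace $A$ by $A/P$, which inherits every hypothesis --- quotients of noetherian rings are noetherian, and the Nullstellensatz, the property ``rational $\Rightarrow$ primitive in $A\otimes_k\overline{k}$'', and the property ``primitive $\Rightarrow$ locally closed in $A\otimes_k K$'' all pass to quotients, since the prime and primitive spectra of a quotient ring embed in those of the ring. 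So we may assume $A$ is prime and $(0)$ is rational, and the goal becomes to show that $\bigcap_{(0)\neq\mathfrak p\in\Spec A}\mathfrak p\neq(0)$.

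Next I would push everything up to the algebraic closure and then to the large field. Put $\overline{A}=A\otimes_k\overline{k}$; since $k$ has characteristic zero, scalar extension preserves semiprimeness, so $\overline{A}$ is a semiprime noetherian $\overline{k}$-algebra with pairwise incomparable minimal primes $\mathfrak q_1,\dots,\mathfrak q_m$, each contracting to $(0)$ in $A$ (as $A$ is prime) and with $\bigcap_i\mathfrak q_i=(0)$. By the standard behaviour of the extended centre under scalar extension in characteristic zero, rationality of $(0)$ in $A$ forces each $\mathfrak q_i$ to be rational over $\overline{k}$, so the hypothesis makes each $\mathfrak q_i$ primitive in $\overline{A}$; and since $\overline{A}$ satisfies the Nullstellensatz, the endomorphism ring of a faithful simple $\overline{A}/\mathfrak q_i$-module is a division ring algebraic over $\overline{k}$, hence equal to $\overline{k}$. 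Now fix, as in the hypotheses, an uncountable algebraically closed extension $K\supseteq\overline{k}$ in which primitive ideals of $\tilde A:=A\otimes_k K=\overline{A}\otimes_{\overline{k}}K$ are locally closed. The crucial point is that primitivity ascends along $\overline{k}\hookrightarrow K$: since the commutant of the faithful simple $\overline{A}/\mathfrak q_i$-module is exactly the base field $\overline{k}$, a Jacobson-density argument shows that the module tensored up over $(\overline{A}/\mathfrak q_i)\otimes_{\overline{k}}K$ is again faithful and simple. Hence each $\tilde{\mathfrak q}_i:=\mathfrak q_i\tilde A$ is primitive, in particular prime; the $\tilde{\mathfrak q}_i$ are pairwise incomparable with $\bigcap_i\tilde{\mathfrak q}_i=(0)$, so they are exactly the minimal primes of the semiprime ring $\tilde A$. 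The second hypothesis now makes each $\tilde{\mathfrak q}_i$ locally closed, so the intersection $I_i$ of the primes of $\tilde A$ strictly containing $\tilde{\mathfrak q}_i$ satisfies $I_i\supsetneq\tilde{\mathfrak q}_i$. Set $I:=I_1\cdots I_m$; incomparability rules out $I_j\subseteq\tilde{\mathfrak q}_i$, so each $I_j$ has nonzero image in the prime ring $\tilde A/\tilde{\mathfrak q}_i$, whence $I$ has nonzero image modulo every $\tilde{\mathfrak q}_i$ and therefore $I\neq(0)$.

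It remains to descend. Let $\mathfrak p$ be a nonzero prime of $A$; since $k$ has characteristic zero, $\tilde A/\mathfrak p\tilde A=(A/\mathfrak p)\otimes_k K$ is semiprime, so $\mathfrak p\tilde A$ is the intersection of the primes of $\tilde A$ lying above it. Any such prime $\mathfrak Q$ contains some minimal prime $\tilde{\mathfrak q}_i$, and strictly so, for $\mathfrak Q=\tilde{\mathfrak q}_i$ would give $\mathfrak p\subseteq\mathfrak p\tilde A\cap A\subseteq\tilde{\mathfrak q}_i\cap A=\mathfrak q_i\cap A=(0)$; hence $\mathfrak Q\supseteq I_i\supseteq I$. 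Thus $\mathfrak p\tilde A\supseteq I$ for every nonzero prime $\mathfrak p$ of $A$, and since scalar extension by the free $k$-module $K$ commutes with intersections of ideals,
\[ \Big(\bigcap_{(0)\neq\mathfrak p\in\Spec A}\mathfrak p\Big)\tilde A=\bigcap_{(0)\neq\mathfrak p\in\Spec A}\mathfrak p\tilde A\supseteq I\neq(0), \]
so $\bigcap_{(0)\neq\mathfrak p\in\Spec A}\mathfrak p\neq(0)$; that is, $(0)$ is locally closed in $\Spec A$. Together with Theorem~\ref{thm:null} this gives the Dixmier--Moeglin equivalence for $A$.

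I expect the real difficulty to be the ascent of primitivity from $\overline{A}$ to $A\otimes_k K$ in the second step --- this is exactly where the hypothesis on $A\otimes_k\overline{k}$ and the identification of the commutant with $\overline{k}$ (furnished by the Nullstellensatz, $\overline{k}$ being algebraically closed) are indispensable --- together with the two characteristic-zero base-change inputs, namely that semiprimeness is preserved and that the extended centre, hence rationality, behaves as claimed under scalar extension. Note that uncountability of $K$ is never used directly in the argument above; it enters only through the hypothesis that primitive ideals of $A\otimes_k K$ are locally closed.
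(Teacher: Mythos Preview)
The paper does not actually prove this theorem; its entire ``proof'' reads: ``This exact form is not found in Irving-Small and one should see Rowen [Row2, Theorem 8.4.27].'' So there is nothing in the paper to compare your argument against directly.

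That said, your outline is the standard Irving--Small strategy and is essentially correct: reduce via Theorem~\ref{thm:null} to showing that rational implies locally closed; replace $A$ by $A/P$ so that $(0)$ is rational; ascend to $\overline{A}=A\otimes_k\overline{k}$, where the minimal primes over $(0)$ are rational and hence (by hypothesis) primitive with commutant $\overline{k}$; push primitivity up to $A\otimes_k K$ via density; apply the second hypothesis to obtain local closedness there; and descend. Your descent computation, and the fact that tensoring with a free module commutes with arbitrary intersections of ideals, are both handled correctly.

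The two base-change lemmas you flag are indeed the substantive inputs, and both deserve more than a wave of the hand. The ascent of primitivity from $\overline{k}$ to $K$ is adequately sketched by your density argument once the commutant is exactly $\overline{k}$. The assertion that rationality of $(0)$ in $A$ forces each minimal prime of $\overline{A}$ to be rational over $\overline{k}$ is correct but is precisely where a citation (to Rowen or to Irving--Small) or a genuine argument is needed; ``standard behaviour of the extended centre under scalar extension'' is not a proof. One smaller gap: your parenthetical ``(as $A$ is prime)'' only yields that \emph{some} minimal prime $\mathfrak q_i$ of $\overline{A}$ contracts to $(0)$ in $A$; to get this for \emph{all} of them you need the transitive action of $\Gal(\overline{k}/k)$ on the minimal primes---and that same transitivity, incidentally, reduces rationality of all the $\mathfrak q_i$ to rationality of any one of them.
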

\begin{proof} This exact form is not found in Irving-Small and one should see Rowen \cite[Theorem 8.4.27]{Row2}.
\end{proof}
This argument was how Irving-Small showed that the Dixmier-Moeglin equivalence holds for $U(L)$ when $L$ is a finite-dimensional Lie algebra over a field $k$ of characteristic zero.  The original work of Dixmier and Moeglin applies to the complex case, although their argument applies to finite-dimensional Lie algebras over an uncountable algebraically closed field of characteristic zero.  Since enveloping algebras behave well under base change, one can then invoke the Irving-Small result above to get that the Dixmier-Moeglin equivalence holds over a field of characteristic zero.  We note that in positive characteristic, enveloping algebras of finite-dimensional Lie algebras satisfy a polynomial identity \cite{Bac}, and so the Dixmier-Moeglin equivalence holds here too, by Theorem \ref{thm:DMEex}.
\subsection{Other closure properties}
 Bell, Wu, Wu \cite{BWW} gave general closure properties under certain classes of Ore extensions and extension of scalars.  In particular if $k$ is an uncountable algebraically closed field of characteristic zero and if $A$ is a finitely generated noetherian $k$-algebra of finite GK dimension such that all prime ideals of $A$ are completely prime, then if $A$ satisfies the Dixmier-Moeglin equivalence then so do the Ore extensions $A[x;\sigma]$, $A[x;\delta]$, where $\sigma$ is a $k$-algebra automorphism of $A$ and $\delta$ is a $k$-linear derivation of $A$, provided $\sigma$ and $\delta$ preserve a finite-dimensional generating subspace of $A$.  It would be good to remove the hypotheses on the base field and the completely prime hypothesis on prime ideals in this result, although the completely prime hypothesis does cover the cases of a commutative base ring, which is perhaps the most interesting case when dealing with skew polynomial rings.  
 
 In terms of extensions of scalars, the authors show that if $k$ is an uncountable algebraically closed field of characteristic zero, then if $A$ is a noetherian algebra that satisfies the Dixmier-Moeglin equivalence then so is $A\otimes_k F$ for every extension of $F$, provided ${\rm dim}_k(A)\le \aleph_0$.  We note that in general one does not have that the noetherian property is preserved under extension of scalars, so this result does require some sort of base field hypothesis.  Extending this, recent work of Bell, Wang, and Yee  \cite{BWY} shows that if $R$ and $S$ are prime noetherian algebras that satisfy the Dixmier-Moeglin equivalence and if $R\otimes_k S$ is prime noetherian and satisfies the Nullstellensatz then $R\otimes_k S$ satisfies the Dixmier-Moeglin equivalence.  This then extends the result of \cite{BWW}.  In addition to this, \cite{BWY} shows that satisfying the Dixmier-Moeglin equivalence is a Morita invariant and that if $R$ satisfies the Dixmier-Moeglin equivalence and $e$ is an idempotent of $R$ then $eRe$ satisfies the Dixmier-Moeglin equivalence.

\section{Lorenz' extension to the non-noetherian setting}
\label{LorSec}
One very interesting development in the study of the Dixmier-Moeglin equivalence has been work of Lorenz \cite{Lor08, Lor09}, extending this to the non-noetherian case. We note that the definition of rationality requires the ability to form some sort of localization and this is not available in general.  Lorenz works around this issue, by working with the extended centroid, which has the advantage of being something that can be formed in any associative algebra and coincides with the centre of the Goldie ring of quotients in the prime Goldie case.

To do this, one constructs the Amitsur-Martindale ring of quotients of $R$, which we denote $Q_{AM}(R)$.  We give an overview of the construction, but only in the case of a prime ring.  For more details about the general construction and its properties, we refer the reader to \cite{Lor08}.  

Given a prime ring $R$ and a nonzero two sided ideal $I$ of $R$, we let ${\rm Hom}(I_R,R_R)$ denote the set of all right $R$-module homomorphisms from $I$ to $R$.  Then we take $X$ to be the union of the hom-sets 
${\rm Hom}(I_R,R_R)$ as $I$ ranges over nonzero two-sided ideals of $R$ and we put an equivalence relation $\sim$ on $X$ by declaring that $f:I\to R$ and $g:J\to R$ are equivalent if the restrictions of $f$ and $g$ to some nonzero ideal $L\subseteq I\cap J$ agree.  Since $R$ is prime, the intersection of two nonzero ideals is again nonzero.  Then we take $Q_{AM}(R)$ to be $X/\sim$.  Then we can add elements $f:I\to R$ and $g:J\to R$ of $Q_{AM}(R)$ by adding their restrictions to $I\cap J$ and multiplication is given by composition, where to form $f\circ g$, we restrict $g$ to $g^{-1}(I)$.  It is straightforward, although somewhat tedious, to check that these operations respect $\sim$ and that $Q_{AM}(R)$ is a ring under these operations.  

We observe that $R$ embeds in $Q_{AM}(R)$ via the homomorphism $r\mapsto f_r :R_R\to R_R$, $f_r(x)= rx$. This is an embedding, because if it were not then we would necessarily have $f_r\sim 0$ and so there would be a nonzero two-sided ideal $I$ such that $f_r(I)=0$. But this cannot occur unless $r=0$ since $R$ is prime.  We then define the \emph{extended centroid}, $\mathcal{C}(R)$, of $R$ to be $Z(Q_{AM}(R))$.  Observe that $\mathcal{C}(R)$ is a field when $R$ is a prime ring since if $z: I_R\to R_R$ is a nonzero central element of $Q_{AM}(R)$, then we first note that $z$ is $1$-$1$ since if $z(a)=0$ for some nonzero $a\in I$ then $z$ vanishes on the right ideal $aR$; since $z$ commutes with the maps $f_r$ above then one can show that $z$ vanishes on a two-sided ideal, which contradicts the fact that it is nonzero. If we now let $J=z^{-1}(I)$ and define $y:J_R\to R_R$ via $y(r) = a$ where $a\in R$ is the unique element such that $z(a)=x$ then we see that $y$ is a right $R$-module homomorphism that is the inverse of $z$.

Thus $\mathcal{C}(R)$ is a field and we say that a prime $k$-algebra is \emph{rational} if $\mathcal{C}(R)$ is an algebraic extension of $R$.  When $R$ is prime right Goldie (in particular, if $R$ is prime noetherian) then $Z(Q(R))$ coincides with $\mathcal{C}(R)$ (see \cite[1.4.2]{Lor08}).  We note that in the prime noetherian case, if $z=ab^{-1}\in Z(Q(R))$, with $b$ regular, then the fact that $[z,b]=0$ gives that $a$ and $b$ commute and so $ab^{-1}=b^{-1}a$ and then $[z,r]=0$ gives $arb=bra$ for all $r\in R$. In particular if we let $J=RbR$ then the map $f_z:J_R\to R_R$ given by $f_z(xby) = xay$ is a right $R$-module homomorphism, since if $\sum x_i b y_i =0$ then $\sum x_i b y_i a =0$, which then gives
$\sum x_i a y_i b =0$ using the fact that $arb=bra$ for all $r\in R$.  Finally, since $b$ is regular, this gives $\sum x_i a y_i =0$ and so we see that $f_z$ is indeed a well-defined homomorphism from $J_R$ to $R_R$. 
Then the map $z\mapsto f_z$ gives an injection from $Z(Q(R))\to \mathcal{C}(R)$.  Moreover, given $g:J_R\to R_R$ in $\mathcal{C}(R)$ we can pick some regular $b\in J_R$ if $R$ is prime noetherian and if we let $z = g(b) b^{-1}$ then the fact that $g$ is central in $Q_{AM}(R)$ gives that $\sum x_i b y_i =0 \implies \sum x_i g(b) y_i =0$.  

Lorenz works in this setting of having a potentially non-noetherian $k$-algebra $R$ over an algebraically closed field $k$ endowed with a rational action of an algebraic group $G$ over $k$, acting by $k$-algebra automorphisms of $R$.  Then $G$ acts on ${\rm Spec}(R)$, and one can see that the action of $G$ preserves the set of locally closed ideals, the set of primitive ideals, and the set of rational prime ideals of $R$.  As before, we let $G$-${\rm Spec}(R)$ denote the set of $G$-prime ideals; that is, $G$-invariant ideals $I$ such that if $JL\subseteq I$ with $J,L$ $G$-invariant ideals containing $I$ then either $J=I$ or $L=I$.  Then we have a map $\Phi$ from ${\rm Spec}(R)$ to $G$-${\rm Spec}(R)$ given by
$$P\mapsto \bigcap_{g\in G} g\cdot P.$$  We note that $\Phi(P)$ is a semiprime ideal of $R$ and since $\Phi(P)$ is $G$-invariant, $G$ acts on $R/\Phi(P)$ and this induces an action of $G$ on the extended centroid of $R/\Phi(P)$.  We then say that $\Phi(P)$ is $G$-rational if the $G$-invariant elements of $\mathcal{C}(R/\Phi(P))$ is an algebraic extension of $k$.  We deal with the case when $k$ is algebraically closed, and so in this case the $G$-invariants should just be $k$.  

Remarkably, Lorenz \cite{Lor08} shows that this map takes rational prime ideals of $R$ to $G$-rational $G$-
prime ideals and even more striking is that this map gives a bijection between $G$-orbits of rational prime ideals of $R$ and $G$-rational $G$-prime ideals of $R$.  In a sequel to this paper, Lorenz \cite{Lor09} shows that when $R$ satisfies the Nullstellensatz, we have that the collection of $G$-prime ideals is finite if and only if $R$ satisfies the Dixmier-Moeglin equivalence. What is truly remarkable about this result is how general it is: for example, it applies to any affine algebra over the complexes that is endowed with a rational action of an algebraic group.  In a later paper, Lorenz \cite{Lor14} shows that when the group is a torus then one has that the prime spectrum can be expressed as a disjoint union of spaces that are homeomorphic to tori, which is a generalization of an important result of Goodearl and Letzter \cite{GoLet}.
Lorenz' extension of the Dixmier-Moeglin equivalence to not-necessarily-noetherian algebras is used in work of Abrams, Rangaswamy, and the author \cite{ABR} to show that the Dixmier-Moeglin equivalence holds, in the above sense, for Leavitt path algebras of finite directed graphs, by exploiting a natural torus action.  
\subsection{The Poisson Dixmier-Moeglin equivalence}
Quantum algebras and Poisson algebras share an intimate connection.  This comes from the fact that if one has a quantized coordinate ring of an affine variety $\mathcal{O}_q(V)$ with $q$ a transcendental indeterminate then when one specializes $q$ to be $1$, one recovers the ordinary coordinate ring of $V$, a commutative ring.  In particular, it is convenient to think of $\mathcal{O}_q(V)$ as a faithfully flat $\mathbb{Z}[q,q^{-1}]$-algebra tensored up to $\mathbb{C}$, where we can then specialize $q$ at nonzero complex values, and we concentrate now on this setting.  Then for $x,y\in \mathcal{O}_q(V)$, one has $[x,y]=(q-1)f(x,y)$ for some $f(x,y)\in \mathcal{O}_q(V)$ and so we can create a bracket $\{\cdot \, , \cdot \}: \mathcal{O}(V)\times \mathcal{O}(V)\to \mathcal{O}(V)$ by declaring that $\{x,y\} = f(x,y)|_{q=1}$; i.e., the image of $f(x,y)$ in $\mathcal{O}(V)$ after specializing at $q=1$.  For example, if we take the ring of $2\times 2$ quantum matrices, which has generators $a,b,c,d$ and relations $ab=qba, ac=qca, cd=qdc, bd=qdb, bc=cb, ad-da=(q-q^{-1})bc$, then if we apply the procedure given above, we find
$$ab-ba = (q-1)ba, ac-ca = (q-1)ca, cd-dc=(q-1)dc,$$~$$bd-db=(q-1)db, bc-cb = (q-1)\cdot 0, ad-da = (q-1)(q+1)q^{-1} bc,$$ and so
$$\{a,b\}=ba, \{a,c\}=ca, \{c,d\}=dc, \{b,d\}=db, \{b,c\}=0, \{a,d\}= 2bc.$$
As it turns out, knowing what the bracket does to generators tells you how to commute the bracket for any two elements of the algebra.   The reason for this is that the bracket is easily seen to be $k$-bilinear, anti-symmetric, and one can check that for $f,g,h\in \mathcal{O}(V)$ and $\{fg, h\} = \{f,h\}g +f\{g,h\}$.  In fact, this bracket is a \emph{Poisson bracket}, meaning that it is a Lie bracket and has the property that for each fixed $f\in \mathcal{O}(V)$ the maps $L_f, R_f: \mathcal{O}(V)\to \mathcal{O}(V)$ given by $L_f(g) = \{f,g\}$ and $R_f(g)=\{g,f\}$ are $k$-linear derivations of $\mathcal{O}(V)$.  Thus we have the principle that quantizations of classical (commutative) objects should generally yield a Poisson bracket on the commutative object. A remarkable result due to Kontsevich \cite{K} shows that a type of reverse principle holds: Given a Poisson manifold one can create a family of deformations of the space of smooth functions on the manifold parametrized by a variable $\hbar$.

In light of this correspondence and the work of Goodearl and Letzter mentioned earlier, it is natural to ask whether there is a Poisson Dixmier-Moeglin equivalence for affine commutative algebras equipped with a Poisson bracket.  To make sense of this, we must define the notions of rationality, primitivity, and being locally closed in the Poisson setting.  Let $k$ be a field of characteristic zero, and let $A$ be a finitely generated commutative $k$-algebra equipped with a Poisson bracket $\{\cdot \, , \cdot \}$.  We call such an algebra a \emph{Poisson algebra}.  Then an ideal $I$ of $A$ is a \emph{Poisson ideal} if for $f\in I$ and $g\in A$ we have $\{f,g\}\in A$.  We note that if $I$ is a Poisson ideal then it is not difficult to show that so is its radical and so is every minimal prime ideal above $I$.  We call a prime ideal that is a Poisson ideal a \emph{Poisson prime ideal}.  Then if $P$ is a Poisson prime ideal, we can form the quotient ring $B=A/P$ and the Poisson bracket will induce a Poisson bracket on $B$, and we can extend this bracket to the field of fractions of $B$ via the rule $\{fg^{-1},h\} = \{f,h\} g^{-1} - f g^{-2} \{g,h\}$ for $f,g,h\in B$ and then use the anti-symmetric property to extend this in the case when $h$ is in ${\rm Frac}(B)$.  Given an algebra $R$ with a Poisson bracket, we call the \emph{Poisson centre} of $R$ the set of $f\in R$ such that $\{f,g\}=0$ for all $g\in R$.  Then $P$ is \emph{Poisson rational} if the Poisson centre of ${\rm Frac}(B)$ is a finite extension of $k$; $P$ is \emph{Poisson primitive} if there is a maximal ideal of $Q$ of $B$ that does not contain any nonzero Poisson prime ideals; finally, $P$ is \emph{Poisson locally closed} if the intersection of the nonzero Poisson prime ideals that strictly contain $P$ is an ideal that properly contains $P$.  

Armed with these notions, we can then define the Poisson Dixmier-Moeglin equivalence for a Poisson algebra, as being an algebra for which the notions of Poisson rationality, Poisson primitivity, and being Poisson locally closed are equivalent for all Poisson prime ideals of the algebra.   Brown and Gordon \cite[Question 3.2]{BrGo2} whether the Poisson Dixmier-Moeglin equivalence holds for all affine complex Poisson algebras, and it has been shown to hold in numerous cases: mainly those coming via the semiclassical limit construction applied to a quantum algebra and closely related algebras (see, for example,  \cite{LL, J, O1, O2, GL2}).  A negative answer to the question of Brown and Gordan was given in \cite{BLLM17} although it is shown in this paper that the answer is affirmative when the Krull dimension is at most two.
  
\section{Counterexamples}
As has been pointed out already, there are counterexamples to the Dixmier-Moeglin equivalence for affine noetherian algebras.  In the non-affine case it is quite easy to construct counterexamples.  Nevertheless, there are no ``easy'' affine counterexamples, in the sense that it generally requires some effort to show that a given ring does not satisfy the Dixmier-Moeglin equivalence---especially for algebras which satisfy the Nullstellensatz. The reason for this is that rational prime ideals, even when not locally closed, are in some sense close to being locally closed.  In the affine case there are at most countably many height one primes when $(0)$ is a rational prime ideal of a noetherian ring (cf. Irving \cite{Irv79a}).  And showing that there are indeed infinitely many such primes can be non-trivial.  

Most counterexamples rely on characterizations of primitivity in skew polynomial rings given in \S \ref{skew}, and are generally expressible as a skew polynomial extension of an commutative ring.  The first counterexample is due to Lorenz \cite{Lor}. Lorenz constructed an algebra $R=\mathbb{C}[x^{\pm 1},y^{\pm 1}][z^{\pm 1};\sigma]$ where $\sigma$ is an automorphism of the form
$x\mapsto x^a y^b, y\mapsto x^c y^d$, where
\[ A:=\left( \begin{array}{cc} a & b \\ c & d\end{array}\right)\]
is a matrix in ${\rm SL}_2(\mathbb{Z})$ whose eigenvalues are not roots of unity.  Then $R$ is the complex group algebra of a polycyclic group of the form $\mathbb{Z}^2 \rtimes \mathbb{Z}$.  Then Lorenz shows that $(0)$ is a rational prime ideal in this case, but that it is not locally closed.  Characterizations of primitivity in skew Laurent polynomial rings give that $R$ is primitive. 
So his example shows that $${\rm rational},~{\rm primitive}\centernot\implies {\rm locally~closed}$$ in general.

Irving \cite{Irv01} constructed an example of a primitive noetherian algebra with centre not equal to a field.  In particular, this algebra does not satisfy the Nullstellensatz and it is immediate that for such a ring $(0)$ is a primitive ideal that is not rational.  Thus we see that $${\rm primitive} \centernot\implies {\rm rational}$$ in general for affine noetherian algebras.  Earlier counterexamples due to Irving can be round in \cite{Irv79}

Jordan \cite[7.10--7.14]{Jor93} gives the following construction. 
Let $S = R[t;\sigma]$, where $R = \mathbb{C}[x, x^{-1}, y, y^{-1}]$ and where $\sigma$ is the $\mathbb{C}$-algebra automorphism given by $x\mapsto yx^{-1}$, $y\mapsto x$.  Jordan claims that the ring $R$ is not $\sigma$-special and so the L\'eroy-Matczuk theorem then gives that $S$ is not primitive.  On the other hand, it is not hard to show that $(0)$ is a rational prime ideal that is not locally closed.  It should be noted that there is a gap in Jordan's argument, which is filled in a paper of Brown, Carvalho, and Matczuk \cite{BCM17}, using an argument due to Goodearl that relies on deep techniques from the field of ``unlikely intersections''.  A sketch of a more elementary argument is also provided by the authors and a proof of a more general result appears in \cite{BG18}.  Thus we see that 
$${\rm rational}\centernot\implies {\rm primitive}.$$

Bell, Launois, Le\'on S\'anchez, and Moosa \cite{BLLM17} constructed a counterexample to the Poisson Dixmier-Moeglin equivalence, which also yields a counterexample to the ordinary Dixmier-Moeglin equivalence with a ring of the form $R[x;\delta]$ with $R$ a finitely generated complex commutative algebra of Krull dimension three.  This example is notable in that it has finite Gelfand-Kirillov dimension.  We are unaware of any counterexamples to the Dixmier-Moeglin equivalence that are affine noetherian algebras with Gelfand-Kirillov dimension at most $3$. 

We end this survey with a question.  Notice that the above examples show that in the affine noetherian case, 
that none of the various implications comprising the Dixmier-Moeglin equivalence hold in general, with the possible exception that locally closed might imply primitive or rational.  We ask whether it is the case that a prime $P$ of an affine noetherian ring being locally closed in the prime spectrum implies that it is either rational or primitive?

\end{document}

\bibitem[Lor14]{Lor14} Lorenz, Martin(1-TMPL)
On the stratification of noncommutative prime spectra. (English summary) 
Proc. Amer. Math. Soc. 142 (2014), no. 9, 3013?3017.